\documentclass[reqno]{amsart}
\usepackage{graphicx} % Required for inserting images
\usepackage{amsmath}
\usepackage{bm}
\usepackage{amsfonts}
\usepackage{amssymb}
\usepackage{color}
\usepackage{mathtools}
\usepackage{amsthm}
\usepackage{float}
\usepackage[authoryear,round]{natbib}
\usepackage{xcolor}
\usepackage[colorlinks=true,citecolor=blue]{hyperref}

\usepackage[
  a4paper,
  top=40mm,
  bottom=40mm,
  left=30mm,
  right=30mm
]{geometry}

\usepackage{setspace}
\newcommand{\eps}{\varepsilon}
\newcommand\R{\mathbb{R}}
\newcommand\bR{{\mathbb{R}}}
\newcommand\bC{{\mathbb{C}}}

\newcommand{\ovl}[1]{\overline{#1}}

\allowdisplaybreaks % 数式がページを跨いでもOK

\theoremstyle{plain}
\newtheorem{thm}{Theorem}            % 定理環境
\newtheorem{prop}[thm]{Proposition} % 命題（同連番）
\newtheorem{cor}[thm]{Corollary}     % 系（同連番）

\theoremstyle{definition}
\newtheorem{ass}[thm]{Assumption}    % 仮定（同連番）

\theoremstyle{remark}
\newtheorem{rem}[thm]{Remark}         % 注意・補足（同連番）

\begin{document}

\title[Pulse dynamics in a bulk--surface system]{Geometry--induced pulse dynamics in a bulk--surface reaction--diffusion system for cell polarization}

\author[R. Watanabe]{Riku Watanabe}
\address{Department of Mathematics, Faculty of Science, Hokkaido University, Hokkaido, 060-0810, Japan}
\email{watanabe.riku.y9@elms.hokudai.ac.jp}

\thanks{Date: \today. }

\thanks{{\em Keywords. bulk--surface system, dynamic boundary condition, pulse solutions, geometric effects, cell polarization, wave-pinning}}
\thanks{{\em 2020 MSC} 35K57, 35B25, 35B36, 35R09}
\begin{abstract}
This paper studies a bulk--surface reaction--diffusion system for cell polarization in two-dimensional domains.
The model describes the formation of localized patterns through the wave-pinning mechanism, while explicitly incorporating the effect of cell shape.
Using singular perturbation methods, we formally derive reduced ordinary differential equations describing the wave-pinning dynamics on a fast time scale and the subsequent slow drift of pulse solutions induced by domain geometry.
The resulting slow dynamics is a gradient flow of a potential function whose geometry-dependent part is expressed in terms of the Neumann Green's function.
Moreover, we give a probabilistic interpretation of the potential function using the excursion Poisson kernel.
We then analyze the reduced dynamics in several concrete geometries, including dumbbell-shaped domains and perforated disks.
In these examples, we characterize stationary pulse positions, their stability, and the bifurcation structures arising from changes in geometric parameters.
To evaluate the geometric terms appearing in the reduced dynamics, we use a conformal mapping method to compute the Neumann Green's function for these domains. Our analysis reveals geometry-induced phenomena such as nontrivial stationary pulse locations and both supercritical and subcritical pitchfork bifurcations.
Finally, we perform numerical simulations to support the theoretical predictions.
\end{abstract}

\maketitle
%%%%%%%%%%%%%%%%%%%%%%%%%%%%%%%%%
\setcounter{tocdepth}{3}

\section{Introduction}

Cell polarity plays a fundamental role in many biological processes, including cell migration, cell division, morphogenesis, and signal transduction. Typically, molecules that are initially distributed almost uniformly inside a cell become spatially biased in response to stimuli and form a localized active region on the cell membrane. Mathematical models based on reaction--diffusion equations have been widely used to describe such polarity formation  \cite{goehring2011,otsuji2007}.

A representative mathematical model for cell polarity is the wave-pinning model (WP), which is a mass-conserving bistable reaction--diffusion system describing the dynamics of Rho GTPases  \cite{mori2008}. Using this model, they proposed the wave-pinning mechanism, in which an activated region first propagates and then stops at a finite position due to substrate depletion. This mechanism differs from the classical Turing instability in that localization of the active protein is driven by bistable reaction kinetics together with mass conservation. Since then, the WP model has served as a simple and fundamental framework for explaining the formation and maintenance of localized active regions in cell polarity. For mathematical studies of the WP model, see  \cite{gomez2023,ikeda2025,mori2011,kuwamura2024,mori2022,yotsutani2015}.

In the standard WP model, however, the active and inactive forms are often treated as variables defined on the same spatial domain. In actual cell polarity, the active molecules mainly diffuse on the cell membrane, whereas the inactive molecules diffuse in the cytosol. Therefore, the shape of the cell membrane may directly affect pattern formation and the selection of localized regions. A natural framework for describing such geometric effects is a bulk--surface reaction--diffusion system.

In a bulk--surface system, the concentrations on the cell membrane, or surface, and in the cytosol, or bulk, are described by an unknown function $u$ on the boundary and an unknown function $v$ in the interior domain, respectively. In this paper, we consider the following model, which has been used in studies of bulk--surface cell-polarity models  \cite{giese2015,ramirez2015}:
\begin{equation}\label{eq:BS}\tag{BS}
\left\{
\begin{aligned}
\eps u_t &= \eps^2 \Delta_\Gamma u + f(u,v)\quad &&\text{on }\Gamma,\\
\eps v_t &= D\Delta v \quad &&\text{in }\Omega,\\
D\partial_n v &= -f(u,v)\quad &&\text{on }\Gamma,
\end{aligned}
\right.
\end{equation}
where $\Omega\subset\mathbb{R}^2$ or $\mathbb{R}^3$ is a bounded domain and $\Gamma=\partial\Omega$ is its boundary. Moreover, $0<\eps\ll1$, $\partial_n$ denotes the outward normal derivative, and $D>0$ is a constant. The precise assumptions on this model are stated in Section~\ref{sec:preliminary}. The boundary condition represents the flux of the cytosolic concentration induced by reactions on the membrane. As a consequence, \eqref{eq:BS} has the following conservation law:
\begin{align}\label{eq:mass-conservation}
    M:=\int_\Gamma u(0,y)\,dS_y+\int_\Omega v(0,x)\,dx
    =\int_\Gamma u(t,y)\,dS_y+\int_\Omega v(t,x)\,dx
    \quad (t>0).
\end{align}
Thus, as in the WP model, the system \eqref{eq:BS} exhibits the wave-pinning phenomenon and forms localized pulse solutions. However, since $v$ diffuses in $\Omega$ and is coupled to $u$ through the boundary flux, the domain geometry enters the dynamics through the bulk-mediated nonlocal interaction on the boundary. This is an essential difference from the standard WP model formulated on a single spatial domain. For a detailed discussion of the relationship between the WP model, \eqref{eq:BS}, and related models, see \cite{morita2021}.

Because of this feature, the influence of domain shape on \eqref{eq:BS} has attracted considerable interest. In particular, numerical studies have investigated where localized pulses on the boundary move under the influence of domain geometry  \cite{cusseddu2022,giese2015,ramirez2015}. We focus on the dynamics of pulse solutions of the two-dimensional version of \eqref{eq:BS}. As shown in Figure~\ref{fig:sim}, a pulse solution slowly moves due to the effect of the domain geometry and eventually stops. Such slow dynamics induced by geometry has been extensively studied in the field of pattern formation  \cite{chen2011,ishii2026,ni1993,sakajo2021}.

\begin{figure}[t]
\centering
\includegraphics[width=0.85\textwidth]{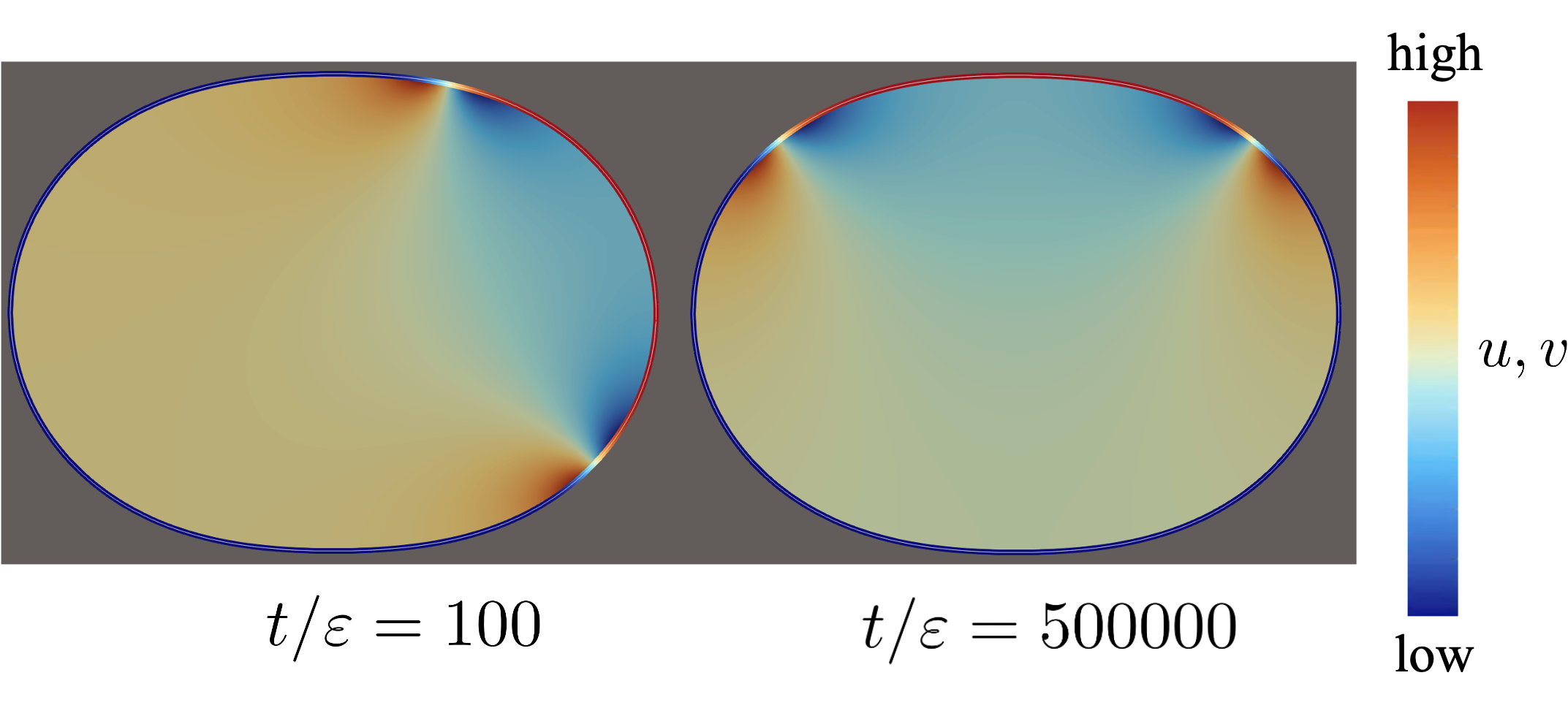}
\caption{Numerical simulation of the slow drift of a pulse solution in \eqref{eq:model}. The pulse moves along the boundary under the effect of domain geometry and approaches a stationary location.}
\label{fig:sim}
\end{figure}

The system \eqref{eq:BS} and related models have been studied as models for signal transduction, GTPase cycles, membrane-protein clustering, and receptor--ligand binding. In particular, when the domain is radially symmetric, classical Turing instabilities and non-classical pattern formation induced by bulk coupling have been investigated  \cite{borgqvist2021,morita2020,ratz2014}. The limit of large cytosolic diffusivity $(D\to\infty)$ has also been studied. In this limit, the bulk variable becomes almost spatially homogeneous, and the system is reduced to a nonlocal scalar reaction--diffusion equation on the boundary, called a shadow system  \cite{hausberg2018}. Such shadow systems have been used as an effective tool for reducing the analysis of \eqref{eq:BS} and related models to lower-dimensional nonlocal problems. In particular, the existence of stationary patterns and the derivation of interface equations have been studied for shadow systems  \cite{miller2023,morita2021}. However, numerical simulations indicate that the shadow system and the original system \eqref{eq:BS} do not necessarily exhibit the same behavior; in particular, the spatial distribution of $v$ can play an essential role  \cite{cusseddu2022}.

Thus, much of the existing analytical work on \eqref{eq:BS} has focused on the instability of spatially homogeneous steady states in radially symmetric domains or on the shadow limit. On the other hand, these approaches simplify the influence of geometry and therefore do not fully describe how the domain shape affects localized patterns. One reason why such effects remain analytically challenging is that \eqref{eq:BS} couples unknown functions defined on spaces of different dimensions, so that analytical methods developed for standard reaction--diffusion systems cannot be applied directly. In this paper, we extend singular perturbation methods so that they can be applied to \eqref{eq:BS}, and we investigate how the dynamics of pulse solutions is affected by the domain geometry.

More precisely, we formally derive the following ODEs for the half-width $w(t)$ and the center location $s_0(t)$ of a pulse solution on two different time scales:

\noindent\textbf{Main result.}
\begin{align}\label{eq:ODE1}
t=O(1):\left\{
\begin{aligned}
\frac{dw}{dt}&=\frac{J(v_0(w))}{\kappa(v_0(w))},\\
\frac{ds_0}{dt}&=0
\end{aligned}
\right. \qquad \text{(wave-pinning)}
\end{align}
and
\begin{align}\label{eq:ODE2}
t=O(\eps^{-2}):
\left\{
\begin{aligned}
w&=w_*,\\
\frac{ds_0}{dt}
&=-\eps^2\frac{\Delta h(v_*)J'(v_*)}{4\kappa_*D}E'(s_0;w_*)
\end{aligned}
\right. \qquad \text{(  slow dynamics)}
\end{align}
at leading order. 

Here, $J(v)$ is the function introduced in Assumption~\ref{ass:f}--(2), and $v_*$ denotes the root of $J$ specified in the same assumption. Moreover, $w_*$ is the half-width of the stationary pulse solution defined in \eqref{eq:def-w*}. The functions $v_0$ and $\kappa$ are defined in Subsection~\ref{subsec:ODE-w}, and they satisfy $v_0(w_*)=v_*$, $v_0'(w_*)<0$, and $\kappa>0$. The function $E$ encodes the geometric information of $\Omega$; more precisely, $E$ is defined in \eqref{eq:E-def} in terms of the Neumann Green's function of $\Omega$. Finally, we set $\kappa_*:=\kappa(v_*)>0$, while $\Delta h(v_*)>0$ is defined as in Assumption~\ref{ass:f}.

Equation~\eqref{eq:ODE1} describes the wave-pinning mechanism on the fast time scale, during which the pulse location remains fixed. In this regime, a pulse solution with half-width $w_*$ is formed. Moreover, this equation shows that stable wave-pinning occurs only when $J'(v_*)>0$. This observation is consistent with the stability condition for stationary pulse solutions in the WP model  \cite{ikeda2025}.

Equation~\eqref{eq:ODE2} describes the slow drift of the pulse location while the pulse width is fixed at $w_*$; see Figure~\ref{fig:sim}. Under the condition $J'(v_*)>0$, this equation shows that the pulse location follows the gradient flow of the potential function $E(\cdot;w_*)$. Thus, \eqref{eq:ODE2} explicitly describes the effect of the domain geometry on the pulse dynamics.

Furthermore, based on the reduced ODE \eqref{eq:ODE2}, we investigate pulse dynamics in concrete domains, such as a dumbbell-shaped domain and a perforated disk, under the condition $J'(v_*)>0$. More precisely, we compute the potential $E$ for these domains and analyze its critical points in order to determine the locations and stability of stationary pulse solutions. Through these calculations, we analyze the bifurcation structure of stationary points and reveal geometry-induced phenomena, including nontrivial equilibria.

Next, we provide a probabilistic interpretation of the potential function $E$ derived above. 
Here we assume that $\Omega$ is a simply connected domain with smooth boundary. 
For this purpose, we introduce the excursion Poisson kernel. 
We define the usual Poisson kernel $P_\Omega$ as the kernel representing a harmonic function $u$ with smooth boundary data $\phi$ in the form
$u(x)=\int_{\partial\Omega}P_\Omega(x,y)\phi(y)\,ds_y$.
It is well known that the Poisson kernel admits a probabilistic interpretation: 
$P_\Omega(x,y)$ is the probability density that a Brownian particle starting from a point $x\in\Omega$ first hits the boundary near $y\in\partial\Omega$. 
For the relation between Brownian motion and harmonic functions, see \cite[Chapter~7]{legall2016}.

It is known that the following limit exists:
\begin{align}\label{eq:excursion-def}
P_{\partial \Omega}(x_1,x_2)
:=
\lim_{\delta\downarrow 0}
\delta^{-1}P_\Omega(x_1-\delta n_{x_1}, x_2),
\qquad
(x_1,x_2\in\partial\Omega,\ x_1\neq x_2).
\end{align}
The function $P_{\partial\Omega}(x_1,x_2)$ is called the excursion Poisson kernel. For distinct $x_1,x_2\in\partial\Omega$, $P_{\partial\Omega}(x_1,x_2)=P_{\partial\Omega}(x_2,x_1)$ holds.
For basic properties of the excursion Poisson kernel noted here, see \cite{kozdron2005}.
We then have the following corollary from Proposition~\ref{prop:Conformal-rep-of-E} in Section~\ref{sec:examples}.
\begin{cor}\label{cor:Poisson}
Let
\[
x_1=\gamma(s_0-w_*),
\qquad
x_2=\gamma(s_0+w_*).
\]
Then
\begin{align}\label{eq:E-Poisson}
E(s_0;w_*)
=
\pi^{-1}\log\!\left(\pi P_{\partial\Omega}(x_1,x_2)\right)
\end{align}
holds.
Consequently, by \eqref{eq:ODE2},
\begin{align}\label{eq:probablistic-ODE}
\frac{ds_0}{dt}
=
-\eps^2
\frac{\Delta h(v_*)J'(v_*)}
{4\pi\kappa_*D}
\frac{d}{ds_0}
\log P_{\partial \Omega}(x_1,x_2)
\end{align}
holds.
\end{cor}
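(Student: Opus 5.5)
Since $\Omega$ is simply connected, I would take the Riemann map $F:\Omega\to\mathbb{D}$ and use it to express both $E(s_0;w_*)$ and $P_{\partial\Omega}(x_1,x_2)$ in terms of $F$, then compare the two expressions.

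\textbf{Step 1: $E$ via the Riemann map.} By Proposition~\ref{prop:Conformal-rep-of-E}, which I take as given, $E(s_0;w_*)$ can be written through $F$. I expect the form
\[
E(s_0;w_*)=\pi^{-1}\log\frac{|F'(x_1)|\,|F'(x_2)|}{|F(x_1)-F(x_2)|^{2}}
\]
up to an additive constant. This comes from the Neumann Green's function of $\mathbb{D}$ restricted to the boundary, which is $-\pi^{-1}\log|z-\zeta|$ plus a constant, combined with the conformal change of variables.

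\textbf{Step 2: $P_{\partial\Omega}$ via the Riemann map.} I would compute $P_{\partial\mathbb{D}}$ directly from the disk Poisson kernel
\[
P_{\mathbb{D}}(z,\zeta)=\frac{1-|z|^2}{2\pi|z-\zeta|^2}.
\]
Take $z=\zeta_1(1-\delta)$ and $\zeta=\zeta_2$. Then $1-|z|^2=2\delta+O(\delta^2)$, and the definition \eqref{eq:excursion-def} gives
\[
P_{\partial\mathbb{D}}(\zeta_1,\zeta_2)=\frac{1}{\pi|\zeta_1-\zeta_2|^2}.
\]
Next I would use the conformal covariance of the Poisson kernel, $P_\Omega(x,y)=P_{\mathbb{D}}(F(x),F(y))\,|F'(y)|$, which follows from the change of arclength measure. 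Moving the interior point inward a distance $\delta$ along $-n_{x_1}$ moves its image inward a distance $\delta|F'(x_1)|+o(\delta)$ along the disk normal, since $F$ is conformal and maps the inward normal to the inward normal. This yields
\[
P_{\partial\Omega}(x_1,x_2)=|F'(x_1)|\,|F'(x_2)|\,P_{\partial\mathbb{D}}(F(x_1),F(x_2)).
\]
This is exactly $\pi^{-1}$ times the argument of the logarithm in Step 1.

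\textbf{Step 3: conclusion.} Substituting Step 2 into Step 1 gives \eqref{eq:E-Poisson}. Differentiating in $s_0$ and inserting the result into \eqref{eq:ODE2} gives \eqref{eq:probablistic-ODE}, since the factor $\pi$ inside the logarithm drops out under differentiation.

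\textbf{Main obstacle.} The delicate point is matching constants exactly, so that no additive constant is left over. This depends on the normalization of the Neumann Green's function used in \eqref{eq:E-def}: its mean-zero or mean-regular-part convention, and the $|\Gamma|$-dependent terms. Some of these may be constant in $s_0$ and harmless for the ODE, but the identity \eqref{eq:E-Poisson} is stated exactly. My first move would be to read off the precise constant in Proposition~\ref{prop:Conformal-rep-of-E}. If a constant remained, I would check whether $E$'s definition subtracts it, and otherwise confine the claim to equality up to constants, which suffices for \eqref{eq:probablistic-ODE}.

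The boundary regularity of $F$ needed in Step 2 (that $F$ extends smoothly to $\partial\Omega$ with $F'\neq0$ there) follows from Kellogg's theorem, since $\partial\Omega$ is smooth.
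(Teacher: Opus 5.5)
Your proposal is correct and follows the paper's proof. Like the paper, you combine Proposition~\ref{prop:Conformal-rep-of-E} with the disk formula $P_{\partial D}(\zeta_1,\zeta_2)=\pi^{-1}|\zeta_1-\zeta_2|^{-2}$ and the covariance $P_{\partial\Omega}(x_1,x_2)=|F'(x_1)|\,|F'(x_2)|\,P_{\partial D}(F(x_1),F(x_2))$, which is equivalent to the paper's $\rho(\theta_1)\rho(\theta_2)P_{\partial\Omega}=P_{\partial D}$ since $\rho=|f'|=1/|F'|$; the only difference is that you derive these two facts from the explicit disk Poisson kernel, whereas the paper cites \cite{kozdron2005}.

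Your concern about a leftover additive constant does not arise. Proposition~\ref{prop:Conformal-rep-of-E} is an exact identity, because in $2G_\Gamma(s_1,s_2)-H(s_1,s_1)-H(s_2,s_2)$ the constant $C_0$ and the $Q$-terms cancel identically, so \eqref{eq:E-Poisson} holds with no constant.
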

The proof of this theorem is given in Appendix~\ref{app:poisson}.

By the definition of the excursion Poisson kernel, for fixed distinct points
$x_1,x_2\in\partial\Omega$,
\begin{align}
P_\Omega(x_1-\delta n_{x_1},x_2)
=
\delta P_{\partial\Omega}(x_1,x_2)
+
o(\delta)
\qquad
\text{as }\delta\to0.
\end{align}
This reflects the fact that a Brownian particle starting just inside the boundary near $x_1$ hits the boundary again near $x_1$ in most cases. 
On the other hand, although the corresponding hitting density is small, some particles reach a neighborhood of a boundary point $x_2$. 
This hitting density is of order $O(\delta)$, and the excursion Poisson kernel gives the coefficient of its leading-order term.

When $J'(v_*)>0$, \eqref{eq:probablistic-ODE} shows that the pulse location follows the gradient flow of
$\log P_{\partial\Omega}(\gamma(\cdot-w_*),\gamma(\cdot+w_*))$.
Thus, formally speaking, \eqref{eq:probablistic-ODE} implies that a pulse of fixed width moves so as to make it less likely for a Brownian particle, starting near one transition layer and absorbed upon hitting the boundary, to reach a neighborhood of the other transition layer.

Figure~\ref{fig:poisson} illustrates this probabilistic interpretation by adding arrows to the numerical simulation in the dumbbell-shaped domain.
At the initial stage shown in the left panel, a Brownian particle starting near one transition layer can relatively easily reach the neighborhood of the other transition layer through the bulk.
In contrast, at the stationary state shown in the right panel, the narrow neck of the dumbbell-shaped domain hinders such Brownian excursions, making it more difficult for a particle starting near one transition layer to reach the neighborhood of the other transition layer.
This interpretation makes it possible to predict the pulse dynamics intuitively by considering the motion of Brownian particles, without conducting numerical simulations.

\begin{figure}[t]
\centering
\includegraphics[width=0.95\textwidth]{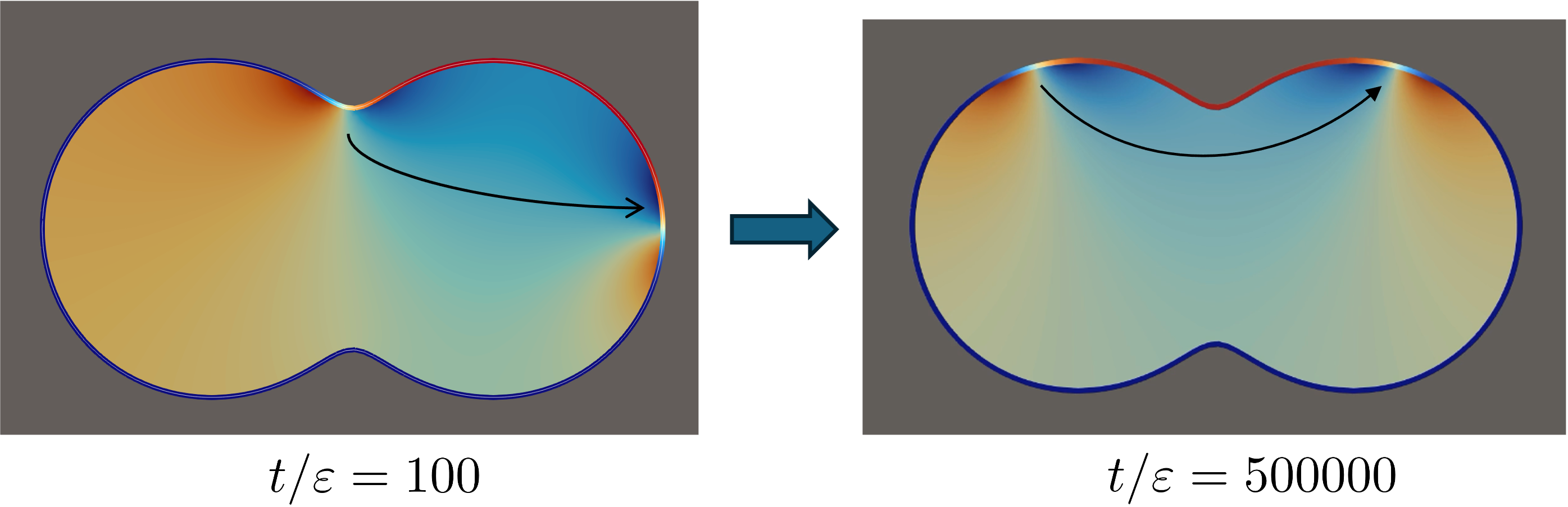}
\caption{
The numerical simulation is performed using the same parameter values as in Fig.~\ref{fig:sim-dumbbell}.
The initial condition is the same as that used in the upper simulation in Fig.~\ref{fig:sim-dumbbell}.
}
\label{fig:poisson}
\end{figure}

The rest of this paper is organized as follows. In Section~\ref{sec:preliminary}, we state the problem setting. In Section~\ref{sec:ODE}, we derive the ODEs \eqref{eq:ODE1} and \eqref{eq:ODE2}. In Section~\ref{sec:examples}, we compute the potential $E$ for concrete domains and analyze the corresponding bifurcation structures. Finally, Section~\ref{sec:conclusion} is devoted to concluding remarks.

\section{Problem setting}\label{sec:preliminary}

In this paper, we consider a bounded domain $\Omega$ in $\bR^2$. In this case, \eqref{eq:BS} can be written as
\begin{equation}\label{eq:model}
\left\{
\begin{aligned}
\eps u_t &= \eps^2  u_{ss} + f(u,v)\quad &&\text{on }\Gamma,\\
\eps v_t &= D\Delta v \quad &&\text{in }\Omega,\\
D\partial_n v &= -f(u,v)\quad &&\text{on }\Gamma.
\end{aligned}
\right.
\end{equation}
Here, $\Gamma:=\partial\Omega$ is a closed $C^4$ curve, $L:=|\Gamma|$ denotes its length, and
$s\in[0,L)$ denotes the arc-length parameter on $\Gamma$. We write
$\gamma:[0,L)\to\bR^2$ for an arc-length parametrization of $\Gamma$.
The unknown functions are $u=u(s,t)$ and $v=v(x,t)$.
We assume that $f$ satisfies the following conditions.

\begin{ass}\label{ass:f}
\begin{enumerate}
\item \textup{(Bistability)}
The function $f(u,v)$ is smooth, namely $f\in C^\infty(\bR^2)$.
There exist constants $v_\mathrm{min}$ and $v_\mathrm{max}$ such that, for each fixed
\(v\in (v_\mathrm{min},v_\mathrm{max})\), the equation \(f(u,v)=0\) has exactly three roots
\[
h^{-}(v)<h^{0}(v)<h^{+}(v).
\]
Moreover, these roots satisfy
\[
f_{u}\!\bigl(h^{\pm}(v),v\bigr)<0,\qquad
f_{u}\!\bigl(h^{0}(v),v\bigr)>0.
\]
We define $\Delta h(v):=h^+(v)-h^-(v)$.

\item \textup{(Mass balance condition)}
Define
\[
J(v):=\int_{h^{-}(v)}^{h^{+}(v)} f(u,v)\,du.
\]
We assume that there exists an isolated root $v=v_*\in (v_\mathrm{min},v_\mathrm{max})$ of $J(v)=0$ satisfying
\[
J'(v_*)\neq 0.
\]

\item \textup{(Stability of homogeneous states )}
For any \(v\in (v_\mathrm{min},v_\mathrm{max})\), we assume that
\[
f_{u}\!\bigl(h^{\pm}(v),v\bigr)<0< f_{v}\!\bigl(h^{\pm}(v),v\bigr).
\]
Under this assumption, \cite{mori2011} observed that the homogeneous states $(u,v)=(h^\pm(v),v)$ are stable for $v\in (v_\mathrm{min},v_\mathrm{max})$ in the WP model.
\end{enumerate}
\end{ass}

For example, the function
\[
f(u,v)=\Bigl(k_0+\gamma_0\frac{u^2}{1+u^2}\Bigr)v-u,
\qquad \gamma_0>8k_0>0,
\]
satisfies the above assumptions.
We also assume that the total mass $M$ lies in the admissible range
\begin{align}\label{ass:M}
    M\in\bigl(v_*|\Omega|+Lh^-(v_*),\ v_*|\Omega|+Lh^+(v_*)\bigr).
\end{align}
We define the Neumann Green's function $G$ as the unique solution of
\begin{align*}
\begin{cases}
\Delta_x G=\dfrac{1}{|\Omega|}-\delta(x-y) & \text{in }\Omega,\\
\partial_{n_x}G(x,y)=0 & \text{on }\Gamma,\\
\displaystyle \int_\Omega G(x,y)\,dx=0.
\end{cases}
\end{align*}
We denote its boundary trace by
\[
G_{\Gamma}(s,s'):=G(\gamma(s),\gamma(s')).
\]
Then, for \(s,s'\in[0,L)\) with \(s\neq s'\), we decompose \(G_\Gamma\) into its singular and regular parts as
\begin{align}\label{eq:G-decomp}
    G_{\Gamma}(s,s')
    =-\frac{1}{\pi}\log|\gamma(s)-\gamma(s')|+H(s,s').
\end{align}
The function \(G\) satisfies the following properties:
\begin{align}\label{eq:G-property}
\begin{aligned}
    G(x,y)&=G(y,x)
    && \text{for }x,y\in\Omega,\ x\neq y,\\
    G(\cdot,y)&\in C^4(\overline\Omega\setminus\{y\})
    && \text{for }y\in\Omega,\\
    G_{\Gamma}(s,s')&=G_\Gamma(s',s)
    && \text{for }s,s'\in[0,L),\ s\neq s',\\
    G_{\Gamma}(\cdot,s')&\in C^4_\mathrm{per}\bigl([0,L)\setminus\{s'\}\bigr)
    &&\text{for }s'\in[0,L).
\end{aligned}
\end{align}
In this paper, the subscript $\cdot_\mathrm{per}$ indicates periodicity with respect to the arc-length variable \(s\).
Moreover, \(H\) extends as a \(C^2\)-function to the whole set \([0,L)\times[0,L)\), and satisfies
\begin{align*}
H(s,s')&=H(s',s)
\quad \text{for }s,s'\in[0,L),\\
H&\in C^2_\mathrm{per}\bigl([0,L)\times[0,L)\bigr).
\end{align*}

\section{Formal derivation of interface ODEs}\label{sec:ODE}

In this section, we formally derive reduced equations of motion for pulse solutions of
\eqref{eq:model} with two sharp interfaces
\[
s=s_1(t),\qquad s=s_2(t),\qquad 0<s_1(t)<s_2(t)<L,
\]
for sufficiently small $\eps$.
We define the pulse center and the half-width by
\[
s_0(t):=\frac{s_1(t)+s_2(t)}{2},\qquad
w(t):=\frac{s_2(t)-s_1(t)}{2}.
\]
Without loss of generality, we may assume
\[
0<w<L/2,\qquad w<s_0<L-w.
\]

Suppose that \eqref{eq:model} is initially close to a homogeneous equilibrium state
\((u,v)=(h^\pm(v),v)\). By Assumption~\ref{ass:f}--(3), such homogeneous states are linearly stable, and hence sufficiently small perturbations return to a homogeneous equilibrium. On the other hand, if the initial perturbation is sufficiently large, the following dynamics is expected to occur on different time scales:
\begin{enumerate}
\item[(i)] sharp interfaces are formed on the time scale \(O(\eps|\log\eps|)\)
\cite{alfaro2008,chen1992};
\item[(ii)] the interfaces propagate on the \(O(1)\) time scale and stop when \(v\) reaches the value \(v=v_*\) satisfying the mass-balance condition in Assumption~\ref{ass:f}; this is the wave-pinning process;
\item[(iii)] the resulting pulse drifts slowly due to geometric effects on the \(O(\eps^{-2})\) time scale.
\end{enumerate}
We are interested in the processes (ii) and (iii). During these processes, the variable \(v\) relaxes on the time scale \(O(\eps D^{-1})=O(\eps)\). This relaxation is much faster than the \(O(1)\) and \(O(\eps^{-2})\) time scales considered here. Therefore, we adopt the quasi-stationary approximation
\[
\partial_t\tilde v=0,
\]
where
\[
\ovl v(t):=\frac1{|\Omega|}\int_\Omega v(x,t)\,dx,
\qquad
\tilde v(x,t):=v(x,t)-\ovl v(t).
\]
Thus, we consider the following system:
\begin{equation}\label{eq:model-quasi-stationary}
\left\{
\begin{aligned}
\eps u_t &= \eps^2 u_{ss} + f(u,v)\quad &&\text{on }\Gamma,\\
\eps \ovl v'(t) &= D\Delta  v \quad &&\text{in }\Omega,\\
D\partial_n v &= -f(u,v)\quad &&\text{on }\Gamma.
\end{aligned}
\right.
\end{equation}

\subsection{ODE for wave-pinning dynamics}\label{subsec:ODE-w}

In this subsection, we formally derive an ODE for \(w\) corresponding to the process (ii).
For a function \(\phi(t)\) depending only on \(t\), we write
\[
\dot\phi:=\frac{d\phi}{dt}.
\]
Since we focus on interfaces moving on the \(O(1)\) time scale, we assume
\[
\dot s_1,\dot s_2=O(1).
\]

We first construct the outer solution. Substituting the expansions
\[
u(s,t)=u_0(s,t)+O(\eps),\qquad
v(x,t)=v_0(x,t)+O(\eps)
\]
into \eqref{eq:model-quasi-stationary}, we obtain at \(O(1)\)
\begin{align*}
    \left\{
    \begin{aligned}
    0 &= f(u_0,v_0)\quad &&\text{on }\Gamma,\\
    0 &= \Delta v_0 \quad &&\text{in }\Omega,\\
    D\partial_n v_0 &= -f(u_0,v_0)=0\quad &&\text{on }\Gamma.
    \end{aligned}
    \right.
\end{align*}
The equation for \(v_0\) implies that \(v_0\) is spatially constant. In particular,
\(\tilde v(x,t)=O(\eps)\). The first equation then gives
\[
u_0=h^\pm(v_0(t)),\quad h^0(v_0(t)).
\]
Since we consider a pulse solution, we choose
\begin{align*}
    u_0(s,t)=
    h^-(v_0(t))+\Delta h(v_0(t))\,\chi_{(s_1,s_2)}(s),
\end{align*}
where \(\chi_{(s_1,s_2)}\) denotes the characteristic function of the interval \((s_1,s_2)\).
Hence,
\begin{align}\label{eq:u,v1}
\begin{aligned}
    u(s,t)&=
    h^-(v_0(t))+\Delta h(v_0(t))\,\chi_{(s_1,s_2)}(s)+O(\eps),\\
    v(x,t)&=v_0(t)+O(\eps).
\end{aligned}
\end{align}

Next, we construct the inner solution. Near the left interface \(s=s_1(t)\), we introduce
\[
\xi=\frac{s-s_1(t)}{\eps}
\]
and set
\[
u(s,t)= U(\xi;v_0(t))+O(\eps).
\]
At the left interface, \(u\) transitions from \(h^-(v_0(t))\) to \(h^+(v_0(t))\). Substitution into \eqref{eq:model} gives
\[
-\frac{\eps\dot s_1}{\eps}U_\xi
=
U_{\xi\xi}+f(U,v_0(t))+O(\eps),
\qquad
U(\pm\infty)=h^\pm(v_0(t)).
\]
Multiplying this equation by \(U_\xi\) and integrating over \(\xi\in\mathbb R\), we obtain
\[
-\dot s_1\int_{-\infty}^{\infty}(U_\xi)^2\,d\xi
=
\int_{h^-(v_0(t))}^{h^+(v_0(t))} f(u,v_0(t))\,du+O(\eps)
=
J(v_0(t))+O(\eps).
\]
Therefore,
\begin{equation*}
\dot s_1(t)=-\frac{J(v_0(t))}{\kappa(v_0(t))}+O(\eps),
\qquad
\kappa(v_0):=\int_{-\infty}^{\infty}(U_\xi(\xi;v_0))^2\,d\xi .
\end{equation*}
At \(s=s_2(t)\), the function \(u\) transitions from \(h^+(v_0)\) to \(h^-(v_0)\). The same calculation gives
\begin{equation*}
\dot s_2(t)=\frac{J(v_0(t))}{\kappa(v_0(t))}+O(\eps).
\end{equation*}
Consequently,
\begin{align*}
    \dot w(t)
    =\frac{\dot s_2(t)-\dot s_1(t)}{2}
    =\frac{J(v_0(t))}{\kappa(v_0(t))}+O(\eps),
    \qquad
    \dot s_0(t)
    =\frac{\dot s_2(t)+\dot s_1(t)}{2}
    =O(\eps).
\end{align*}
Thus, on this time scale, the pulse center remains almost fixed, while the pulse width evolves. Substituting \eqref{eq:u,v1} into the conservation law \eqref{eq:mass-conservation}, we obtain
\begin{align}\label{eq:F=0}
    F(v_0(t),w(t))
    :=2w(t)h^+(v_0(t))+(L-2w(t))h^-(v_0(t))+|\Omega|v_0(t)-M=0,
    \qquad t>0.
\end{align}
Differentiating \(f(h^\pm(v),v)=0\), which follows from Assumption~\ref{ass:f}--(1), and using Assumption~\ref{ass:f}--(3), we obtain
\[
(h^\pm)'(v)
=
-\frac{f_v(h^\pm(v),v)}{f_u(h^\pm(v),v)}
>0
\]
for \(v\in(v_\mathrm{min},v_\mathrm{max})\). Hence,
\[
\partial_{v_0}F(v_0,w)
=
2w(h^+)'(v_0)+(L-2w)(h^-)'(v_0)+|\Omega|
>0
\]
for all \(t>0\). Therefore, by the implicit function theorem, \eqref{eq:F=0} can be solved as
\[
v_0=v_0(w).
\]
Thus, we obtain
\begin{align}\label{eq:w-dot1}
\dot w=\frac{J(v_0(w))}{\kappa(v_0(w))}
\end{align}
at leading order. We define
\begin{align}\label{eq:def-w*}
w_*:=\frac{M-v_*|\Omega|-Lh^-(v_*)}{2\Delta h(v_*)}.
\end{align}
By the admissibility condition \eqref{ass:M}, we have \(0<w_*<L/2\). 
Hence \(w_*\) satisfies $F(v_*,w_*)=0$ or equivalently $v_0(w_*)=v_*$.
Together with Assumption~\ref{ass:f}--(2), this implies that \(w=w_*\) is an equilibrium of \eqref{eq:w-dot1}. A direct calculation shows that the linearized eigenvalue at this equilibrium is
\[
\frac{J'(v_*)v_0'(w_*)}{\kappa(v_*)}.
\]
Differentiating \eqref{eq:F=0} with respect to \(w\), we obtain
\[
v_0'(w)
=
\frac{-2\Delta h(v_0)}
{2w(h^+)'(v_0)+(L-2w)(h^-)'(v_0)+|\Omega|}
<0.
\]
Therefore, if \(J'(v_*)>0\), then \(w=w_*\) is a stable equilibrium of \eqref{eq:w-dot1}. Conversely, if \(J'(v_*)<0\), then \(w=w_*\) is unstable, suggesting that the corresponding pulse solution is unstable. This linearized eigenvalue corresponds to the principal eigenvalue associated with the linearization around stationary pulse solutions in the WP model \cite{ikeda2025}.

\subsection{ODE for slow dynamics}\label{subsec:ODE-s0}

In this subsection, we formally derive an equation of motion for the pulse center \(s_0\) corresponding to the process (iii). We consider the case where wave-pinning occurs stably, and therefore assume $J'(v_*)>0$.
As described above, the geometric drift is expected to occur on the \(O(\eps^{-2})\) time scale. We introduce the slow time
\[
T=\eps^2 t
\]
and assume
\[
\dot s_1,\dot s_2=O(1),
\]
where, in this subsection, for a function \(\phi(T)\) depending only on \(T\), we write
\[
\dot\phi:=\frac{d\phi}{dT}.
\]
For simplicity of notation, we use \(T\) instead of \(t\) as the argument of
$u,v,\ovl v,\tilde v,s_1,s_2,s_0,w$.
Since \(\partial_t=\eps^2\partial_T\), the quasi-stationary system \eqref{eq:model-quasi-stationary} becomes
\begin{equation}\label{eq:model-GD}
\left\{
\begin{aligned}
\eps^3 u_T &= \eps^2 u_{ss} + f(u,v)\quad &&\text{on }\Gamma,\\
\eps^3\ovl v' &= 
D\Delta v \quad &&\text{in }\Omega,\\
D\partial_n v &= -f(u,v)\quad &&\text{on }\Gamma.
\end{aligned}
\right.
\end{equation}

We first consider the outer solution. We expand
\[
u(s,T)=u_0(s,T)+O(\eps^3),
\qquad
v(x,T)=v_0(x,T)+O(\eps^3).
\]
The \(O(1)\) terms in \eqref{eq:model-GD} are
\[
0=f(u_0,v_0)\quad\text{on }\Gamma,\qquad
0=\Delta v_0\quad\text{in }\Omega,\qquad
\partial_n v_0=0\quad\text{on }\Gamma.
\]
Thus, \(v_0\) is spatially constant, and we write
\[
v_0=v_0(T).
\]
Consequently,
\begin{equation}\label{eq:gd-u0}
u_0(s,T)=h^-(v_0(T))+\Delta h(v_0(T))\,\chi_{(s_1,s_2)}(s).
\end{equation}
Expanding
\[
\ovl v(T)=\ovl v_0(T)+\eps \ovl v_1(T)+\eps^2 \ovl v_2(T)+O(\eps^3),
\]
we obtain
\begin{align}\label{eq:ovlv0=v0}
    \ovl v_0(T)=v_0(T).
\end{align}

\begin{rem}
Strictly speaking, the outer expansion of \(u\) should be written as
$u=u_0+\eps u_1+\eps^2u_2$.
However, if one carries out the calculation with this expansion, the terms \(u_1\) and \(u_2\) do not contribute to \(\dot s_0\) and hence do not affect the determination of the pulse location. We therefore omit them.
\end{rem}

Next, we consider the inner solution. Near \(s=s_1(T)\), we introduce
\[
\xi=\frac{s-s_1(T)}{\eps}
\]
and expand
\begin{align*}
u&=U_0(\xi,T)+\eps U_1(\xi,T)+ \eps^2 U_2(\xi,T) +O(\eps^3),\\
v&=V_0(x,T)+\eps V_1(x,T)+ \eps^2 V_2(x,T) +O(\eps^3).
\end{align*}
Then
\[
u_T=\partial_TU_{0}-\frac{\dot s_{1}}{\eps}\partial_\xi U_0
+\eps\partial_TU_1-\dot s_{1}\partial_\xi U_1
+\eps^2\partial_TU_2-\eps \dot s_{1}\partial_\xi U_2,
\]
and
\[
u_{ss}=\frac{1}{\eps^2}\partial_\xi^2U_{0}
+\frac1\eps\partial_\xi^2U_1+\partial_\xi^2U_2.
\]
Substituting these into the first equation of \eqref{eq:model-GD} and comparing terms of each order, we obtain
\begin{align}\label{eq:inner-expansion}
\begin{cases}
\partial_\xi^2U_0+f(U_0,V_0)=0,\\
\mathcal{L}U_1=-f_v(U_0,V_0)V_1, \\
-\dot s_1\partial_\xi U_0=\mathcal{L}U_2+f_v(U_0,V_0)V_2+f_*,
\end{cases}
\end{align}
where
\begin{align*}
\mathcal{L}&:=\partial_\xi^2+f_u(U_0,V_0),\\
f_*&:=\frac12f_{uu}(U_0,V_0)U_1^2
+f_{uv}(U_0,V_0)U_1V_1
+\frac12f_{vv}(U_0,V_0)V_1^2.
\end{align*}
Similarly, near \(s=s_2(T)\), by setting
\[
\xi=\frac{s_2(T)-s}{\eps},
\]
we obtain
\begin{align}\label{eq:inner-expansion2}
\begin{cases}
\partial_\xi^2U_0+f(U_0,V_0)=0,\\
\mathcal{L}U_1=-f_v(U_0,V_0)V_1, \\
\dot s_2\partial_\xi U_0=\mathcal{L}U_2+f_v(U_0,V_0)V_2+f_*.
\end{cases}
\end{align}
For simplicity, we use the same notation
$U_0,U_1,U_2,V_0,V_1,V_2,f_*$
near \(s=s_1\) and \(s=s_2\). When a distinction is needed, we use the superscript \(\cdot^{(i)}\).

We next compute \(v\). From the second and third equations of \eqref{eq:model-GD}, we have
\[
v(x,T)
=\ovl v(T)+
\int_\Gamma G\bigl(x,\gamma(s')\bigr)\,\partial_n v(\gamma(s'),T)\,ds'.
\]
On the other hand, \eqref{eq:model-GD} gives
\[
\partial_n v=-\frac1D f(u,v)
=\frac1D(\eps^2u_{ss}-\eps^3u_T).
\]
Therefore,
\begin{equation}\label{eq:v-green}
v(x,T)
=
\ovl v(T)
-\frac{\eps^3}{D}\int_\Gamma G\bigl(x,\gamma(s')\bigr)u_T(s',T)\,ds'
+\frac{\eps^2}{D}\int_\Gamma G\bigl(x,\gamma(s')\bigr)u_{ss}(s',T)\,ds'.
\end{equation}
Fix \(1/2<\alpha<1\), and set
\[
\Gamma_j(T):=[\,s_j(T)-\eps^\alpha,\ s_j(T)+\eps^\alpha\,]\qquad (j=1,2),
\qquad
\Gamma_{\mathrm{out}}(T):=\Gamma\setminus(\Gamma_1(T)\cup \Gamma_2(T)).
\]
From \eqref{eq:gd-u0}, we use the following approximate form of \(u\):
\begin{align}\label{eq:u-app}
    u(s,T) = 
    \begin{cases}
        h^-(v_*)+\Delta h(v_*)\,\chi_{(s_1,s_2)}(s)
        & \quad (s\in\Gamma_\mathrm{out}(T)),\\
        U_0\bigl((s-s_1(T))/\eps\bigr)
        & \quad (s\in\Gamma_1(T)),\\
        U_0\bigl((s_2(T)-s)/\eps\bigr)
        & \quad (s\in\Gamma_2(T)).
    \end{cases}
\end{align}
Since \(u_T=u_{ss}=0\) on \(\Gamma_{\mathrm{out}}\), only the contributions from \(\Gamma_1(T)\) and \(\Gamma_2(T)\) need to be considered in \eqref{eq:v-green}.
Define $\Gamma_\alpha:=[-\eps^{\alpha-1},\eps^{\alpha-1}]$.
Substituting \eqref{eq:u-app} into the second term on the right-hand side of \eqref{eq:v-green}, and using \eqref{eq:G-decomp}, we find that, at
\(x=\gamma(s)=\gamma(s_1+\eps\xi)\),
\begin{align*}
    &-\frac{\eps^3}{D}\int_\Gamma G\bigl(\gamma(s),\gamma(s')\bigr)u_T(s',T)\,ds'\\
    &=\frac{\eps^2\dot s_1}{D}\int_{\Gamma_1} G_\Gamma(s,s')U_0'\bigg(\frac{s'-s_1}{\eps}\bigg)\,ds'
    -\frac{\eps^2\dot s_2}{D}\int_{\Gamma_2} G_\Gamma(s,s')U_0'\bigg(\frac{s_2-s'}{\eps}\bigg)\,ds'\\
    &=\frac{\eps^3\dot s_1}{D}\int_{\Gamma_\alpha} G_\Gamma(s,s_1+\eps\xi')U_0'(\xi')\,d\xi'
    -\frac{\eps^3\dot s_2}{D}\int_{\Gamma_\alpha} G_\Gamma(s,s_2+\eps\xi')U_0'(-\xi')\,d\xi'\\
    &=-\frac{\eps^3\dot s_1}{\pi D}\int_{\Gamma_\alpha} \log|\gamma(s)-\gamma(s_1+\eps\xi')|U_0'(\xi')\,d\xi'
    +O(\eps^3)\\
    &=O(\eps^3).
\end{align*}
In the last equality, we used the exponential decay of \(U'\) and the integrability of the logarithmic singularity near the origin.
Similarly, the third term on the right-hand side of \eqref{eq:v-green} can be computed at
\(x=\gamma(s)=\gamma(s_1+\eps\xi)\) as follows:
\begin{align*}
    &\frac{\eps^2}{D}\int_\Gamma G\bigl(\gamma(s),\gamma(s')\bigr)u_{ss}(s',T)\,ds'\\
    &=\frac{1}{D}\int_{\Gamma_1} G_\Gamma(s,s')U_0''\bigg(\frac{s'-s_1}{\eps}\bigg)\,ds'
    +\frac{1}{D}\int_{\Gamma_2} G_\Gamma(s,s')U_0''\bigg(\frac{s_2-s'}{\eps}\bigg)\,ds'\\
    &=\frac{\eps}{D}\int_{\Gamma_\alpha}G_\Gamma(s,s_1+\eps\xi')U_0''(\xi')\,d\xi'
    +\frac{\eps}{D}\int_{\Gamma_\alpha} G_\Gamma(s,s_2+\eps\xi')U_0''(-\xi')\,d\xi'\\
    &=\frac{-\eps}{\pi D}\int_{\Gamma_\alpha}
    \log|\gamma(s)-\gamma(s_1+\eps\xi')|U_0''(\xi')\,d\xi'\\
      &\qquad
      +\frac{\eps}{D}\int_{\Gamma_\alpha} H(s,s_1+\eps\xi')U_0''(\xi')\,d\xi'
      +\frac{\eps}{D}\int_{\Gamma_\alpha} G_\Gamma(s,s_2+\eps\xi')U_0''(-\xi')\,d\xi'\\
    &=\frac{-\eps}{\pi D}\int_{\Gamma_\alpha}
    \log|\gamma(s_1+\eps\xi)-\gamma(s_1+\eps\xi')|U_0''(\xi')\,d\xi'\\
      &\qquad
      +\frac{\eps^2}{D}\partial_{s'}H(s,s_1)\int_{\Gamma_\alpha} \xi'\, U_0''(\xi')\,d\xi'
      +\frac{\eps^2}{D}\partial_{s'}G_\Gamma(s,s_2)\int_{\Gamma_\alpha} \xi'\, U_0''(-\xi')\,d\xi'
      +O(\eps^3)\\
    &=\frac{-\eps}{\pi D}\int_{\Gamma_\alpha}
    \log|\eps(\xi-\xi')|U_0''(\xi')\,d\xi'
    +O(\eps^{2\alpha+1})\\
      &\qquad
      +\frac{\eps^2}{D}\partial_{s'}H(s_1+\eps\xi,s_1)\int_\bR \xi'\, U_0''(\xi')\,d\xi'
      +\frac{\eps^2}{D}\partial_{s'}G_\Gamma(s_1+\eps\xi,s_2)\int_\bR \xi'\, U_0''(-\xi')\,d\xi'
      +O(\eps^3)\\
    &=\frac{-\eps}{\pi D}\int_\bR\log|\xi-\xi'|U_0''(\xi')\,d\xi'
      -\frac{\eps^2\Delta h(v_*)}{D}\partial_{s'}H(s_1,s_1)
      +\frac{\eps^2\Delta h(v_*)}{D}\partial_{s'}G_\Gamma(s_1,s_2)
      +o(\eps^2).
\end{align*}
In the fifth equality, we used the expansion
\[
\log|\gamma(s_1+\eps\xi)-\gamma(s_1+\eps\xi')|
=
\log|\eps(\xi-\xi')|
+
O(\eps^2|\xi-\xi'|^2).
\]
Therefore,
\begin{align*}
    v(\gamma(s_1+\eps\xi))-\ovl v(T)
    &=
    -\frac{\eps}{\pi D}\int_\bR \log|\xi-\xi'|U_0''(\xi')\,d\xi'\\
    &\quad
    +\frac{\eps^2\Delta h}{D}
    \bigl(\partial_{s'}G_\Gamma(s_1,s_2)-\partial_{s'}H(s_1,s_1)\bigr)
    +o(\eps^2).
\end{align*}
Similarly,
\begin{align*}
    v(\gamma(s_2-\eps\xi))-\ovl v(T)
    &=
    -\frac{\eps}{\pi D}\int_\bR \log|\xi-\xi'|U_0''(\xi')\,d\xi'\\
    &\quad
    +\frac{\eps^2\Delta h}{D}
    \bigl(-\partial_{s'}G_\Gamma(s_2,s_1)+\partial_{s'}H(s_2,s_2)\bigr)
    +o(\eps^2).
\end{align*}
Hence, by expanding $\ovl v(T)=\ovl v_0(T)+\eps\ovl v_1(T)+\eps^2\ovl v_2(T)$,
\begin{align}\label{eq:V-s1}
\begin{aligned}
    V_0^{(1)}(\gamma(s_1+\eps\xi),T)&=\ovl v_0(T),\\
    V_1^{(1)}(\gamma(s_1+\eps\xi),T)&=\ovl v_1(T)
    -\frac{1}{\pi D}\int_\bR \log|\xi-\xi'|\partial_{\xi'}^2U_0^{(1)}(\xi',T)\,d\xi',\\
    V_2^{(1)}(\gamma(s_1+\eps\xi),T)&=\ovl v_2(T)
    +\frac{\Delta h}{D}
    \bigl(\partial_{s'}G_\Gamma(s_1,s_2)-\partial_{s'}H(s_1,s_1)\bigr).
\end{aligned}
\end{align}
Near \(s=s_2\), we similarly obtain
\begin{align}\label{eq:V-s2}
\begin{aligned}
    V_0^{(2)}(\gamma(s_2-\eps\xi),T)&=\ovl v_0(T),\\
    V_1^{(2)}(\gamma(s_2-\eps\xi),T)&=\ovl v_1(T)
    -\frac{1}{\pi D}\int_\bR \log|\xi-\xi'|\partial_{\xi'}^2U_0^{(2)}(\xi',T)\,d\xi',\\
    V_2^{(2)}(\gamma(s_2-\eps\xi),T)&=\ovl v_2(T)
    +\frac{\Delta h}{D}
    \bigl(-\partial_{s'}G_\Gamma(s_2,s_1)+\partial_{s'}H(s_2,s_2)\bigr).
\end{aligned}
\end{align}

\begin{rem}
If one carries out the calculation including the \(O(\eps)\) term \(U_1\) in the expansion of \(u\), then the \(O(\eps^2)\) term of \(v-\ovl v\) contains a contribution from \(U_1\). However, this term is independent of \(s_1\) and \(s_2\), and hence does not affect the determination of the pulse location. We therefore omit it.
\end{rem}

To match with the outer solution, we impose on the first equation of \eqref{eq:inner-expansion} the boundary conditions
\[
U_0(-\infty,T)=h^-(v_0(T)),
\qquad
U_0(\infty,T)=h^+(v_0(T)).
\]
Multiplying this equation by \(\partial_\xi U_0\) and integrating over \(\R\), we obtain
\[
J(v_0(T))
=
\int_{h^-(v_0(T))}^{h^+(v_0(T))}f(u,v_0(T))\,du
=0.
\]
By Assumption~\ref{ass:f}--(2), \(v=v_*\) is an isolated root of \(J(v)=0\). Thus, we choose
\[
v_0(T)\equiv v_*.
\]
Together with \eqref{eq:ovlv0=v0}, this gives
\[
\ovl v_0(T)\equiv v_*.
\]
Let \(U\) be the solution of
\begin{align}\label{eq:def-U}
    U''+f(U,v_*)=0,
    \qquad
    U(-\infty)=h^-(v_*),\quad U(\infty)=h^+(v_*).
\end{align}
Then
\[
U_0^{(1)}(\xi)=U(\xi).
\]
Similarly, near \(s=s_2\), we have
\[
U_0^{(2)}(\xi)=U(\xi).
\]
Differentiating \eqref{eq:def-U}, we see that \(\mathcal{L}\) has the eigenfunction
\(\partial_\xi U\). Therefore, the solvability condition for the second equation in
\eqref{eq:inner-expansion} is
\begin{align*}
    0
    &=
    \langle-f_v(U,\ovl v_0)V_1,\partial_\xi U\rangle\\
    &=
    -\ovl v_1(T)J'(v_*)
    +\frac{1}{\pi D}
    \int_\bR\int_\bR
    f_v(U_0(\eta),v_*)\log|\eta-\eta' |
    \partial_{\eta'}^2 U_0(\eta')\,\partial_\eta U_0(\eta)
    \,d\eta'\,d\eta.
\end{align*}
For details, see \cite[Lemma~3.1]{hale2005}.
Hence,
\begin{align*}
     \ovl v_1(T)
     =
     \frac{1}{\pi DJ'(v_*)}
     \int_\bR\int_\bR
     f_v(U_0(\eta),v_*)\log|\eta-\eta' |
     \partial_{\eta'}^2 U_0(\eta')\,\partial_\eta U_0(\eta)
     \,d\eta'\,d\eta .
\end{align*}
Substituting \eqref{eq:u-app} and \(v=\ovl v+\tilde v\) into the conservation law
\eqref{eq:mass-conservation}, we obtain
\[
F(v_*,w_0(T))=O(\eps),
\]
where we have expanded
\[
w(T)=w_0(T)+O(\eps).
\]
Here, \(F\) is defined in \eqref{eq:F=0}. Therefore, $w_0(T)\equiv w_*$ and 
\begin{align}\label{eq:w=w*+O(eps)}
    w(T)=w_*+O(\eps).
\end{align}

Multiplying the third equations in \eqref{eq:inner-expansion} and \eqref{eq:inner-expansion2}
by \(\partial_\xi U_0\) and integrating, and using the fact that \(V_2^{(1)}\) and
\(V_2^{(2)}\) are spatially constant, we obtain
\begin{align*}
    -\dot s_1\kappa_*
    &=
    V_2^{(1)}
    \langle f_v(U_0^{(1)},v_*),\partial_\xi U_0^{(1)}\rangle
    +\langle f_*^{(1)},\partial_\xi U_0^{(1)}\rangle\\
    &=
    V_2^{(1)}J'(v_*)+Q_*,
\end{align*}
and
\begin{align*}
    \dot s_2\kappa_*
    &=
    V_2^{(2)}
    \langle f_v(U_0^{(2)},v_*),\partial_\xi U_0^{(2)}\rangle
    +\langle f_*^{(2)},\partial_\xi U_0^{(2)}\rangle\\
    &=
    V_2^{(2)}J'(v_*)+Q_*.
\end{align*}
Here, we have set
\[
\kappa_*:=\int_\bR (\partial_\xi U(\xi))^2\,d\xi,
\qquad
Q_*:=\langle f_*^{(1)},\partial_\xi U_0^{(1)}\rangle
=
\langle f_*^{(2)},\partial_\xi U_0^{(2)}\rangle.
\]
Here, by the choice of the oriented inner variables, we have
\(U_0^{(1)}=U_0^{(2)}=U\), and the equations determining \(U_1^{(i)}\) and \(V_1^{(i)}\) are identical for \(i=1,2\) by \eqref{eq:inner-expansion} and \eqref{eq:inner-expansion2}. Hence \(f_*^{(1)}=f_*^{(2)}\), and the quantity \(Q_*\) is independent of \(i\).
We also used the self-adjointness of \(\mathcal{L}\), which implies
$\langle\mathcal{L}U_2,\partial_\xi U_0\rangle=0$.
Subtracting the two equations and dividing by \(2\), we obtain from \eqref{eq:V-s1} and \eqref{eq:V-s2}
\begin{align}\label{eq:s0'-old}
\begin{aligned}
    \dot s_0
    &=
    \frac{\Delta h(v_*)J'(v_*)}{2\kappa_*D}
    \bigl(
    -\partial_{s'}G_\Gamma(s_2,s_1)
    +\partial_{s'}H(s_2,s_2)
    -\partial_{s'}G_\Gamma(s_1,s_2)
    +\partial_{s'}H(s_1,s_1)
    \bigr).
\end{aligned}
\end{align}
Define
\begin{align}\label{eq:E-def}
    E(s;w_*):=
    2G_\Gamma(s-w_*,s+w_*)
    -H(s-w_*,s-w_*)
    -H(s+w_*,s+w_*).
\end{align}
Then, by \eqref{eq:G-property} and \eqref{eq:w=w*+O(eps)}, \eqref{eq:s0'-old} can be rewritten at leading order as
\begin{align*}
    \dot s_0(T)
    =
    -\frac{\Delta h(v_*)J'(v_*)}{4\kappa_*D}
    E'(s_0(T);w_*).
\end{align*}
Returning to the original time variable \(t\), we obtain \eqref{eq:ODE2}.

\begin{rem}
The potential \(E\) can be rewritten as
\[
E(s_0;w_*)=
2G_\Gamma(s_1,s_2)-\sum_{i=1,2}H(s_i,s_i).
\]
Interestingly, this expression suggests that \(E\) can be interpreted as an interaction potential between the two transition layers. Indeed, the first term on the right-hand side has the form of a two-body interaction potential, whereas the second term has the form of a self-interaction potential.
\end{rem}

\begin{rem}\label{rem:symmetry-E}
Suppose that the domain is symmetric with respect to a reflection axis, and that \(\gamma(s_0)\) lies on this axis. Then the reflection maps \(\gamma(s)\) to \(\gamma(2s_0-s)\). Hence, near \(s_0\), we have $E(s)=E(2s_0-s)$.
Therefore, $E'(s_0)=0$,
and \(s_0\) is a critical point of \(E\). We call such critical points, which arise from symmetry, trivial critical points.
\end{rem}

\begin{rem}
For \(0<w_*\ll1\), we have
\[
E(s_0;w_*)
=
-\frac{2}{\pi}\log(2w_*)
+
\left\{
\frac{\kappa(s_0)^2}{3\pi}
-4\partial_s\partial_{s'}H(s_0,s_0)
\right\}w_*^2
+o(w_*^2),
\]
where \(\kappa(s)\) denotes the curvature of \(\Gamma\) at \(\gamma(s)\).
This should be contrasted with the boundary concentration phenomenon for
least-energy solutions of a semilinear Neumann problem in \cite{ni1993}, where the peak location is selected by critical points of the boundary curvature.
In the present problem, the potential \(E\) is not determined only by the local curvature. More precisely, the term $\partial_s\partial_{s'}H(s_0,s_0)$ reflects the global geometry of \(\Omega\).
\end{rem}

\section{Examples}\label{sec:examples}

In this section, we specify concrete domains \(\Omega\), compute the potential \(E\) derived in the previous section, and analyze the motion of pulse solutions governed by \eqref{eq:ODE2}. In the disk, the pulse location is arbitrary because of rotational symmetry. In contrast, when the symmetry of the domain is broken, the pulse location is selected by the domain geometry. As examples, we analyze two concrete domains in this section: a dumbbell-shaped domain and a perforated disk.

Throughout this section, as in Section~\ref{sec:ODE}, we assume that \(J'(v_*)>0\). Under this assumption, \eqref{eq:ODE2} is a gradient flow of the potential \(E\). Therefore, by studying the local minima and local maxima of \(E\), we can determine the possible locations of stationary pulse solutions and their stability.

\subsection{Conformal representation of \(E\)}\label{subsec:conformal}

We would like to compute the potential \(E\). However, computing $E$ directly for a general domain is generally difficult. Therefore, we compute \(E\) for domains generated by conformal mappings from the disk or an annulus. In what follows, we identify \(\bR^2\) with \(\bC\).

Such computations of Green’s functions based on conformal mappings have been used to analyze, for example, the dynamics of spike solutions in the Gierer–Meinhardt system \cite{kolokolnikov2003, kolokolnikov2004} and mean first passage time problems for narrow escape \cite{pillay2010}. Extensive analyses have been carried out for various domains, including dumbbell-shaped domains.
Although conformal mappings make it possible to compute Green’s functions in these problems, the equations determining the locations of spikes or other localized structures are often complicated. Consequently, analyses combining analytical and numerical approaches are frequently employed.
In the present study, however, remarkably, we are able to obtain the potential function \(E\) in a very simple and explicit form. Moreover, by considering conformal mappings from an annulus, we can derive an equally concise expression even for domains containing a hole.

We first consider the case of the disk.

\begin{prop}\label{prop:Conformal-rep-of-E}
Let
\[
D:=\{z\in\mathbb C:\ |z|<1\}.
\]
Let \(\Omega\subset\mathbb C\) be a simply connected domain, and let $f:D\to\Omega$ be a conformal map from \(D\) onto \(\Omega\). Assume further that
\(f\in C^2(\overline D)\) and
\[
f'(e^{i\theta})\neq0
\qquad
(\theta\in[0,2\pi)).
\]
Define
\[
\rho(\theta):=\bigl|f'(e^{i\theta})\bigr|,\qquad
S(\theta):=\int_0^\theta \rho(\tau)\,d\tau,\qquad
L:=S(2\pi),\qquad
\Theta:=S^{-1}.
\]
Fix \(0<w<L/2\), and set
\[
s_1=s_0-w,\qquad s_2=s_0+w,\qquad
\theta_i:=\Theta(s_i)\quad (i=1,2).
\]
Then
\[
E(s_0)
=
-\frac1\pi
\log\!\left(
4\,\rho(\theta_1)\rho(\theta_2)\sin^2\frac{\theta_2-\theta_1}{2}
\right).
\]
\end{prop}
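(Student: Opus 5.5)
The plan is to pull the Neumann Green's function of $\Omega$ back to the unit disk $D$ through $f$. The aim is a representation in which the boundary trace $G_\Gamma$ splits into an explicit disk part plus a sum of one-point terms that cancel in the combination \eqref{eq:E-def}.

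\textbf{Step 1 (pull-back problem).} Set $\widetilde G(z,\zeta):=G(f(z),f(\zeta))$ for $z,\zeta\in D$. Since $f$ is conformal, $\Delta_z\widetilde G=|f'(z)|^2\bigl(|\Omega|^{-1}-\delta(f(z)-f(\zeta))\bigr)=|f'(z)|^2/|\Omega|-\delta(z-\zeta)$. The Dirac mass transforms with the Jacobian $|f'|^2$, and this factor cancels the one coming from the Laplacian. On $|z|=1$ the outward normal derivative in $\Omega$ equals $\rho^{-1}\partial_r$, so the condition $\partial_{n}G=0$ on $\Gamma$ is equivalent to $\partial_r\widetilde G=0$ on $\partial D$.

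\textbf{Step 2 (ansatz).} I would look for $\widetilde G$ in the form
\[
\widetilde G(z,\zeta)=-\frac1{2\pi}\Bigl(\log|z-\zeta|+\log|1-z\bar\zeta|\Bigr)+R(z)+R(\zeta)+c .
\]
Here $R$ solves $\Delta R=|f'|^2/|\Omega|$ in $D$ with $\partial_rR=\frac1{2\pi}$ on $\partial D$, and $c$ is a constant fixed by the normalization $\int_\Omega G\,dx=0$.

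The following checks are needed:
\begin{itemize}
\item The logarithmic part is harmonic in $z$ away from $\zeta$ and has the correct singularity.
\item On $|z|=1$ one has $\partial_r(\log|z-\zeta|+\log|1-z\bar\zeta|)=1$, which is cancelled exactly by $\partial_rR=\frac1{2\pi}$.
\item The Neumann problem for $R$ is compatible, because $\int_D|f'|^2=|\Omega|=\int_{\partial D}\frac1{2\pi}\,dS$.
\item The term $R(\zeta)$ is harmonic in $z$, and the whole expression is symmetric in $(z,\zeta)$.
\end{itemize}
By uniqueness of the normalized Neumann Green's function, this expression is $\widetilde G$. Since $f\in C^2(\overline D)$, the right-hand side of the equation for $R$ is continuous up to the boundary, and elliptic regularity should give $R\in C^1(\overline D)$. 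That is enough to take boundary traces.

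\textbf{Step 3 (boundary evaluation).} For $z=e^{i\theta_1}$ and $\zeta=e^{i\theta_2}$ we have $|z-\zeta|=|1-z\bar\zeta|=2|\sin\frac{\theta_2-\theta_1}2|$. Hence
\[
G_\Gamma(s_1,s_2)=-\frac1\pi\log\Bigl|2\sin\frac{\theta_2-\theta_1}2\Bigr|+R(e^{i\theta_1})+R(e^{i\theta_2})+c .
\]
For the diagonal regular part, I would use \eqref{eq:G-decomp} and take $s'\to s$. In that limit $|\gamma(s)-\gamma(s')|=|f(e^{i\theta})-f(e^{i\theta'})|=\rho(\theta)|\theta-\theta'|(1+o(1))$ and $|2\sin\frac{\theta-\theta'}2|=|\theta-\theta'|(1+o(1))$. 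This gives
\[
H(s,s)=\frac1\pi\log\rho(\Theta(s))+2R(e^{i\Theta(s)})+c .
\]
Substituting both formulas into $E=2G_\Gamma(s_1,s_2)-H(s_1,s_1)-H(s_2,s_2)$, the terms involving $R$ and $c$ cancel identically. What remains is $-\frac1\pi\log\bigl(4\rho(\theta_1)\rho(\theta_2)\sin^2\frac{\theta_2-\theta_1}2\bigr)$, which is the claimed formula.

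\textbf{Main obstacle.} The delicate point is the rigorous justification of the ansatz in Step 2, namely the existence and boundary regularity of $R$ and the transformation of the $\delta$-source and the Neumann condition under $f$. The key observation that makes everything work is that the nonconstant source $|f'|^2/|\Omega|$ can be absorbed into a sum $R(z)+R(\zeta)$ of one-point functions, which then drops out of $E$.

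A secondary point is the limit defining $H(s,s)$. This needs $f'$ continuous and nonvanishing on $\partial D$, so that the chord length is $\rho(\theta)|\theta-\theta'|(1+o(1))$ uniformly; this is exactly what the hypotheses $f\in C^2(\overline D)$ and $f'(e^{i\theta})\neq0$ provide.
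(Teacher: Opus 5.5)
Your proposal is correct and follows the paper's proof essentially step for step. Your $R$ is the paper's $Q$ plus $|z|^2/(4\pi)$ (up to an additive constant), so your ansatz is the paper's decomposition $G(f(z),f(\zeta))=G_D(z,\zeta)+Q(z)+Q(\zeta)+C_0$ with the disk Green's function written out explicitly, and the boundary evaluation, the limit defining $H(s,s)$, and the cancellation in $E$ are identical to the paper's; two small fixes: the compatibility identity should read $\int_D|f'|^2/|\Omega|\,dz=1=\int_{\partial D}\frac{1}{2\pi}\,dS$, and the reason a single constant $c$ (rather than a function $c(\zeta)$) suffices is the symmetry argument you list, exactly as the paper uses it.
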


\begin{rem}
The function \(\Theta\) in this proposition represents the argument of the point obtained by pulling back the boundary point \(\gamma(s)\in\Gamma\) to the unit circle by \(f\). Namely,
\[
f^{-1}(\gamma(s))=e^{i\Theta(s)}.
\]
\end{rem}

\begin{proof}
Set
\[
\widetilde G(z,\zeta):=G\bigl(f(z),f(\zeta)\bigr)
\qquad (z,\zeta\in D).
\]
Since \(f\) is conformal, the Laplacian transforms as
\[
\Delta_z(\cdot\circ f)=|f'(z)|^2(\Delta_x\cdot)\circ f.
\]
Thus, by the definition of \(G\), the function \(\widetilde G\) satisfies
\[
\Delta_z\widetilde G(z,\zeta)
=
\frac{|f'(z)|^2}{|\Omega|}
-|f'(z)|^2\delta(f(z)-f(\zeta))
=
\frac{|f'(z)|^2}{|\Omega|}
-\delta(z-\zeta)
\qquad (z\in D).
\]
Moreover, since \(f\) is conformal,
\[
\partial_n\widetilde G(\cdot,\zeta)=0
\qquad \text{on }\partial D.
\]

On the other hand, let \(G_D\) be the Green's function in the case \(\Omega=D\). Then \(G_D\) satisfies
\[
\Delta_z G_D(z,\zeta)=\frac1\pi-\delta(z-\zeta),
\qquad
\partial_n G_D(\cdot,\zeta)=0.
\]
Let \(Q\) be the solution of
\begin{align}\label{eq:Q-def}
\begin{cases}
\Delta Q(z)=\dfrac{|f'(z)|^2}{|\Omega|}-\dfrac1\pi & z\in D,\\[1mm]
\partial_n Q(z)=0 & z\in\partial D,\\[1mm]
\displaystyle\int_D Q(z)\,dz=0.
\end{cases}
\end{align}
Define
\[
F(z,\zeta):=\widetilde G(z,\zeta)-G_D(z,\zeta)-Q(z)-Q(\zeta).
\]
Then \eqref{eq:Q-def} implies
\[
\Delta_z F(z,\zeta)=0,\qquad
\partial_n F(\cdot,\zeta)=0.
\]

The solvability condition for \eqref{eq:Q-def} is satisfied, since
\[
\int_D\left(\frac{|f'(z)|^2}{|\Omega|}-\frac1\pi\right)\,dz
=
\frac1{|\Omega|}\int_D |f'(z)|^2\,dz-\frac{|D|}{\pi}
=
\frac{|\Omega|}{|\Omega|}-1
=
0.
\]
Hence \(Q\) exists. For fixed \(\zeta\), the function \(F(\cdot,\zeta)\) is harmonic in \(D\) and satisfies the homogeneous Neumann boundary condition. Therefore, it is constant. Thus, there exists a function \(c(\zeta)\) such that
\[
F(z,\zeta)=c(\zeta).
\]
However, both \(G\) and \(G_D\) are symmetric with respect to interchange of the variables, and \(Q(z)+Q(\zeta)\) is also symmetric. Hence \(F(z,\zeta)\) is symmetric. It follows that \(c(\zeta)\) is independent of \(\zeta\), and hence is a constant \(C_0\). Therefore,
\[
G\bigl(f(z),f(\zeta)\bigr)
=
G_D(z,\zeta)+Q(z)+Q(\zeta)+C_0.
\]

Next, using the explicit formula for \(G_D\) \cite[(4.1)]{kolokolnikov2005},
\[
G_D(z,\zeta)
=
-\frac1{2\pi}\log|z-\zeta|
-\frac1{2\pi}\log|1-z\overline{\zeta}|
+\frac{|z|^2+|\zeta|^2}{4\pi}
-\frac{3}{8\pi},
\]
we find that, when \(|z|=|\zeta|=1\),
\[
|1-z\overline{\zeta}|=|z-\zeta|.
\]
Thus,
\[
G_D(e^{i\theta},e^{i\phi})
=
-\frac1\pi\log|e^{i\theta}-e^{i\phi}|+\frac1{8\pi}.
\]
Absorbing the constant term into \(C_0\), we obtain
\[
G\bigl(f(e^{i\theta}),f(e^{i\phi})\bigr)
=
-\frac1\pi\log|e^{i\theta}-e^{i\phi}|
+Q(e^{i\theta})+Q(e^{i\phi})+C_0.
\]

Now, for \(s,t\in[0,L)\), set
\[
\theta=\Theta(s),\qquad \phi=\Theta(t).
\]
Then
\[
\gamma(s)=f(e^{i\theta}),\qquad
\gamma(t)=f(e^{i\phi}).
\]
By the definition of \(H\), we have
\begin{align*}
H(s,t)
&=
G\bigl(f(e^{i\theta}),f(e^{i\phi})\bigr)
+\frac1\pi\log|f(e^{i\theta})-f(e^{i\phi})|\\
&=
Q(e^{i\theta})+Q(e^{i\phi})+C_0
+\frac1\pi\log
\frac{|f(e^{i\theta})-f(e^{i\phi})|}
{|e^{i\theta}-e^{i\phi}|}.
\end{align*}
Letting \(t\to s\), we obtain
\[
\frac{|f(e^{i\theta})-f(e^{i\phi})|}
{|e^{i\theta}-e^{i\phi}|}
\longrightarrow
|f'(e^{i\theta})|
=
\rho(\theta).
\]
Therefore,
\[
H(s,s)=2Q(e^{i\theta})+C_0+\frac1\pi\log\rho(\theta).
\]
Hence, by \eqref{eq:E-def},
\begin{align*}
E(s_0)
&=
2G\bigl(\gamma(s_1),\gamma(s_2)\bigr)
-H(s_1,s_1)-H(s_2,s_2)\\
&=
-\frac2\pi\log|e^{i\theta_1}-e^{i\theta_2}|
-\frac1\pi\log\rho(\theta_1)
-\frac1\pi\log\rho(\theta_2).
\end{align*}
Since
\[
|e^{i\theta_1}-e^{i\theta_2}|
=
2\left|\sin\frac{\theta_2-\theta_1}{2}\right|,
\]
and since \(0<w<L/2\), we may take \(\theta_2-\theta_1\in(0,2\pi)\), so that the right-hand side is positive. Consequently,
\[
E(s_0)
=
-\frac1\pi
\log\!\left(
4\,\rho(\theta_1)\rho(\theta_2)\sin^2\frac{\theta_2-\theta_1}{2}
\right).
\]
\end{proof}

Next, we consider the case of an annulus.

\begin{prop}\label{prop:Conformal-rep-of-E-annulus}
Let
\[
A_a:=\{z\in\mathbb C:\ a<|z|<1\},
\qquad 0<a<1.
\]
Let \(\Omega\subset\mathbb C\) be a doubly connected domain, and let
$f:A_a\to\Omega$ be a conformal map from \(A_a\) onto \(\Omega\), where we identify
\(\bR^2\) with \(\bC\). Assume further that \(f\in C^2(\overline{A_a})\) and
\[
f'(z)\neq0
\qquad
(z\in \partial A_a).
\]
Moreover, define
\[
\rho(\theta):=\bigl|f'(e^{i\theta})\bigr|,
\qquad
S(\theta):=\int_0^\theta \rho(\tau)\,d\tau,
\qquad
L:=S(2\pi),
\qquad
\Theta:=S^{-1}.
\]
Fix \(0<w<L/2\), and set
\[
s_1=s_0-w,\qquad s_2=s_0+w,\qquad
\theta_i:=\Theta(s_i)\quad (i=1,2).
\]
Then
\[
E(s_0)
=
C_w
-\frac1\pi
\log\!\left(
4\,\rho(\theta_1)\rho(\theta_2)\sin^2\frac{\theta_2-\theta_1}{2}
\right)
+\frac{4}{\pi}\sum_{n=1}^\infty
\frac{a^{2n}}{n(1-a^{2n})}\cos n(\theta_2-\theta_1),
\]
where \(C_w\) is a constant independent of \(s_0\).
\end{prop}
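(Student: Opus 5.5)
The plan is to repeat the argument of Proposition~\ref{prop:Conformal-rep-of-E}, with the disk replaced by the annulus \(A_a\), and then compute the boundary trace of the Neumann Green's function of \(A_a\) on the outer circle by Fourier series. Throughout I assume, as the statement implicitly does, that the pulse lives on the component of \(\Gamma\) that is the image of \(|z|=1\).

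\textbf{Step 1 (pull-back).} Set \(\widetilde G(z,\zeta):=G(f(z),f(\zeta))\). Exactly as in the disk case,
\[
\Delta_z\widetilde G=\frac{|f'(z)|^2}{|\Omega|}-\delta(z-\zeta),
\]
and \(\widetilde G\) has zero normal derivative on both circles \(|z|=1\) and \(|z|=a\), because \(f\) is conformal up to the boundary. Let \(G_A\) be the Neumann Green's function of \(A_a\), normalized with \(\Delta G_A=1/|A_a|-\delta\).

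Define \(Q\) by \(\Delta Q=|f'|^2/|\Omega|-1/|A_a|\) in \(A_a\), \(\partial_nQ=0\) on \(\partial A_a\), \(\int_{A_a} Q=0\). This problem is solvable because \(\int_{A_a}|f'|^2\,dz=|\Omega|\).

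Then \(F(z,\zeta):=\widetilde G-G_A-Q(z)-Q(\zeta)\) is harmonic in \(z\) with homogeneous Neumann data on \(\partial A_a\). Since \(A_a\) is connected, the Neumann problem has only constants in its kernel, so \(F(\cdot,\zeta)\) is constant. Symmetry of \(F\) in \((z,\zeta)\) then makes this a single constant \(C_0\), giving
\[
G(f(z),f(\zeta))=G_A(z,\zeta)+Q(z)+Q(\zeta)+C_0 .
\]

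\textbf{Step 2 (annulus Green's function on \(|z|=1\)).} I will expand \(G_A\) in Fourier modes in the angle and compare each mode with the disk case. For \(n\ge1\), the \(n\)-th angular mode is harmonic in the radial variable, of the form \(A r^n+Br^{-n}\).

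\begin{itemize}
\item The Neumann condition at \(r=a\) forces \(B=Aa^{2n}\).
\item With both points on \(|z|=1\), the source acts as a boundary flux. Matching the flux at \(r=1\) with the disk solution \(A_0r^n\) gives \(nA(1-a^{2n})=nA_0\).
\item Hence the trace at \(r=1\) is
\[
A(1+a^{2n})=A_0\Bigl(1+\frac{2a^{2n}}{1-a^{2n}}\Bigr).
\]
\end{itemize}

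The disk trace is \(-\frac1\pi\log|e^{i\theta}-e^{i\phi}|=\frac1\pi\sum_{n\ge1}\frac{\cos n(\theta-\phi)}{n}\) plus a constant, so \(A_0\) corresponds to the coefficient \(1/(\pi n)\). Therefore
\[
G_A(e^{i\theta},e^{i\phi})=-\frac1\pi\log|e^{i\theta}-e^{i\phi}|+\frac2\pi\sum_{n\ge1}\frac{a^{2n}}{n(1-a^{2n})}\cos n(\theta-\phi)+c,
\]
where the \(n=0\) radial part only contributes a constant \(c\) because \(r=1\) is fixed.

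To make this rigorous rather than heuristic, I would either solve the mode equations with the source placed at \(r_0<1\) and let \(r_0\to1\), or compare directly against the known disk trace term by term. This step is the main obstacle: the singular part and the bookkeeping of constants must be checked carefully.

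\textbf{Step 3 (assemble \(E\)).} Exactly as in the disk proof, the diagonal gives
\[
H(s,s)=2Q(e^{i\theta})+C_0+c+\frac1\pi\log\rho(\theta)+\frac2\pi\sum_{n\ge1}\frac{a^{2n}}{n(1-a^{2n})}.
\]
The last sum converges and is independent of \(s\).

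Substituting into \eqref{eq:E-def}, the \(Q\) terms cancel between \(2G_\Gamma(s_1,s_2)\) and \(H(s_1,s_1)+H(s_2,s_2)\), as in the disk case. The logarithmic terms produce \(-\frac1\pi\log\bigl(4\rho(\theta_1)\rho(\theta_2)\sin^2\frac{\theta_2-\theta_1}{2}\bigr)\). The off-diagonal series enters with factor \(2\), giving \(\frac4\pi\sum_{n\ge1}\frac{a^{2n}}{n(1-a^{2n})}\cos n(\theta_2-\theta_1)\). All remaining constants are collected into \(C_w\), which is independent of \(s_0\).
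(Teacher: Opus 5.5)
Your proposal is correct and follows the paper's proof. Both use the pull-back identity $G(f(z),f(\zeta))=G_{A_a}(z,\zeta)+Q(z)+Q(\zeta)+C_0$ and then assemble $H(s,s)$ and $E$ exactly as in the disk case; the only difference is that the paper cites the boundary trace of the annulus Neumann Green's function from Mangeat--Rieger, whereas you derive it by mode matching, and your coefficients $\frac{1}{\pi n}\cdot\frac{1+a^{2n}}{1-a^{2n}}$ reproduce the cited formula exactly.
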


\begin{proof}
Let \(Q\) be the solution of
\begin{align*}
\begin{cases}
\Delta Q(z)=\dfrac{|f'(z)|^2}{|\Omega|}-\dfrac1{\pi(1-a^2)}
& z\in A_a,\\[1mm]
\partial_n Q(z)=0
& z\in\partial A_a,\\[1mm]
\displaystyle\int_{A_a}Q(z)\,dz=0,
\end{cases}
\end{align*}
and let \(G_{A_a}\) be the Neumann Green's function on the annulus \(A_a\).
As in the disk case, one can show that
\[
G\bigl(f(z),f(\zeta)\bigr)
=
G_{A_a}(z,\zeta)+Q(z)+Q(\zeta)+C_0.
\]
The boundary trace of the Neumann Green's function on the annulus
$G_{A_a}(e^{i\theta},e^{i\phi})$
is given by
\[
G_{A_a}(e^{i\theta},e^{i\phi})
=
C_0-\frac1\pi\log|e^{i\theta}-e^{i\phi}|
+\frac{2}{\pi}\sum_{n=1}^\infty
\frac{a^{2n}}{n(1-a^{2n})}\cos n(\theta-\phi)
\]
\cite[Appendix B]{mangeat2019}.
Substituting this formula into the above decomposition, we obtain, exactly as in the disk case,
\begin{align*}
H(s,t)
&=
Q(e^{i\theta})+Q(e^{i\phi})+C_0\\
&\quad
+\frac1\pi
\log\frac{|f(e^{i\theta})-f(e^{i\phi})|}
{|e^{i\theta}-e^{i\phi}|}
+\frac{2}{\pi}\sum_{n=1}^\infty
\frac{a^{2n}}{n(1-a^{2n})}\cos n(\theta-\phi).
\end{align*}
Repeating the same calculation as in the disk case, we obtain
\[
E(s_0)
=
C_w
-\frac1\pi
\log\!\left(
4\,\rho(\theta_1)\rho(\theta_2)\sin^2\frac{\theta_2-\theta_1}{2}
\right)
+\frac{4}{\pi}\sum_{n=1}^\infty
\frac{a^{2n}}{n(1-a^{2n})}\cos n(\theta_2-\theta_1),
\]
for some constant \(C_w\) independent of \(s_0\).
\end{proof}

\subsection{Case of a dumbbell-shaped domain}\label{subsec:dumbbell}
We next use Proposition~\ref{prop:Conformal-rep-of-E} to compute \(E\) for a dumbbell-shaped domain and investigate its bifurcation structure and dynamics. The map
\[
f(z)=\frac{(1-k)z}{1-kz^2}\qquad (z\in D,\ 0\le k<1)
\]
is conformal, and the shape of \(f(D)\) changes with the parameter \(k\), as shown in Figure~\ref{fig:dumbbell}. When \(k=0\), the domain is the unit disk, while as \(k\to1\), the domain approaches a pair of tangent disks of radius \(1/2\). Moreover, the domain is convex for
$k<k_c:=(\sqrt{2}-1)^2$
and non-convex for \(k>k_c\).

\begin{prop}\label{prop:E-for-f-rational}
Let \(0\le k<1\), and set
\[
f(z)=\frac{(1-k)z}{1-kz^2}\qquad (z\in D).
\]
Furthermore, set $\theta_i:=\Theta(s_i)$ for $i=1,2$.
Then
\begin{align}\label{eq:dumbbell-E}
E(s_0)
=
-\frac1\pi
\log\!\left[
4(1-k)^2\sin^2\frac{\theta_2-\theta_1}{2}\,
\frac{
\sqrt{\bigl((1-k)^2+4k\cos^2\theta_1\bigr)
      \bigl((1-k)^2+4k\cos^2\theta_2\bigr)}
}{
\bigl((1-k)^2+4k\sin^2\theta_1\bigr)
\bigl((1-k)^2+4k\sin^2\theta_2\bigr)
}
\right].
\end{align}
\end{prop}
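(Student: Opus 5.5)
The plan is to apply Proposition~\ref{prop:Conformal-rep-of-E} directly. The whole proof then reduces to checking its hypotheses for the map $f(z)=(1-k)z/(1-kz^2)$ and computing $\rho(\theta)=|f'(e^{i\theta})|$ explicitly. Substituting $\rho(\theta_1)\rho(\theta_2)$ into the formula of that proposition should give \eqref{eq:dumbbell-E}.

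\textbf{Hypotheses.} The poles of $f$ are at $z=\pm k^{-1/2}$, which lie outside $\overline D$ for $0\le k<1$ (for $k=0$ there are none). Hence $f$ is holomorphic on a neighbourhood of $\overline D$, and in particular $f\in C^2(\overline D)$.

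For injectivity, I would compute
\[
f(z)-f(\zeta)=\frac{(1-k)(z-\zeta)(1+kz\zeta)}{(1-kz^2)(1-k\zeta^2)}.
\]
Since $|kz\zeta|<1$ for $z,\zeta\in D$, the factor $1+kz\zeta$ does not vanish, so $f(z)=f(\zeta)$ forces $z=\zeta$. Thus $f$ is a conformal map of $D$ onto $\Omega=f(D)$.

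Differentiating gives
\[
f'(z)=\frac{(1-k)(1+kz^2)}{(1-kz^2)^2}.
\]
For $|z|=1$ we have $|kz^2|=k<1$, so $1+kz^2\neq0$ and hence $f'(e^{i\theta})\neq0$. This is the nondegeneracy condition required in Proposition~\ref{prop:Conformal-rep-of-E}.

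\textbf{Computing $\rho$.} With $z=e^{i\theta}$,
\[
|1\pm ke^{2i\theta}|^2=1\pm2k\cos2\theta+k^2 .
\]
Writing $\cos2\theta=2\cos^2\theta-1=1-2\sin^2\theta$, these become
\[
|1+ke^{2i\theta}|^2=(1-k)^2+4k\cos^2\theta,\qquad |1-ke^{2i\theta}|^2=(1-k)^2+4k\sin^2\theta .
\]
Therefore
\[
\rho(\theta)=\frac{(1-k)\sqrt{(1-k)^2+4k\cos^2\theta}}{(1-k)^2+4k\sin^2\theta}.
\]

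\textbf{Conclusion.} Multiplying $\rho(\theta_1)\rho(\theta_2)$ produces the factor $(1-k)^2$, the product of the two square roots (which combine into the single square root in \eqref{eq:dumbbell-E}), and the product of the two denominators. Inserting this into
\[
E(s_0)=-\frac1\pi\log\!\left(4\rho(\theta_1)\rho(\theta_2)\sin^2\frac{\theta_2-\theta_1}{2}\right)
\]
from Proposition~\ref{prop:Conformal-rep-of-E} yields exactly \eqref{eq:dumbbell-E}. Here $\theta_i=\Theta(s_i)$, with $\Theta$ the inverse of $S(\theta)=\int_0^\theta\rho$ for this particular $\rho$.

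The only step requiring some care is the verification that $f$ is univalent on $D$ with nonvanishing derivative on the boundary; the factorization of $f(z)-f(\zeta)$ handles the former. Everything else is a routine trigonometric simplification.
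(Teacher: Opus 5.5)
Your proof is correct and follows the same route as the paper: compute $f'(z)=(1-k)(1+kz^2)/(1-kz^2)^2$, simplify $|1\pm ke^{2i\theta}|^2$ to get $\rho(\theta)$, and substitute into Proposition~\ref{prop:Conformal-rep-of-E}. Your explicit check of the hypotheses (holomorphy on a neighbourhood of $\overline D$, univalence via the factor $1+kz\zeta$, and $f'(e^{i\theta})\neq0$) is a welcome addition that the paper leaves implicit.
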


\begin{proof}
First, we compute
\[
f'(z)
=
(1-k)\frac{(1-kz^2)-z(-2kz)}{(1-kz^2)^2}
=
\frac{(1-k)(1+kz^2)}{(1-kz^2)^2}.
\]
Therefore,
\[
\rho(\theta)
=
|f'(e^{i\theta})|
=
(1-k)\frac{|1+ke^{2i\theta}|}{|1-ke^{2i\theta}|^2}.
\]
Since
\[
|1+ke^{2i\theta}|^2
=
1+k^2+2k\cos2\theta
=
(1-k)^2+4k\cos^2\theta,
\]
and
\[
|1-ke^{2i\theta}|^2
=
1+k^2-2k\cos2\theta
=
(1-k)^2+4k\sin^2\theta,
\]
we obtain
\[
\rho(\theta)
=
\frac{(1-k)\sqrt{(1-k)^2+4k\cos^2\theta}}
{(1-k)^2+4k\sin^2\theta}.
\]
Thus, \eqref{eq:dumbbell-E} follows from Proposition~\ref{prop:Conformal-rep-of-E}.
\end{proof}

%%%%%
\begin{figure}[t]
\centering
\includegraphics[width=0.45\textwidth]{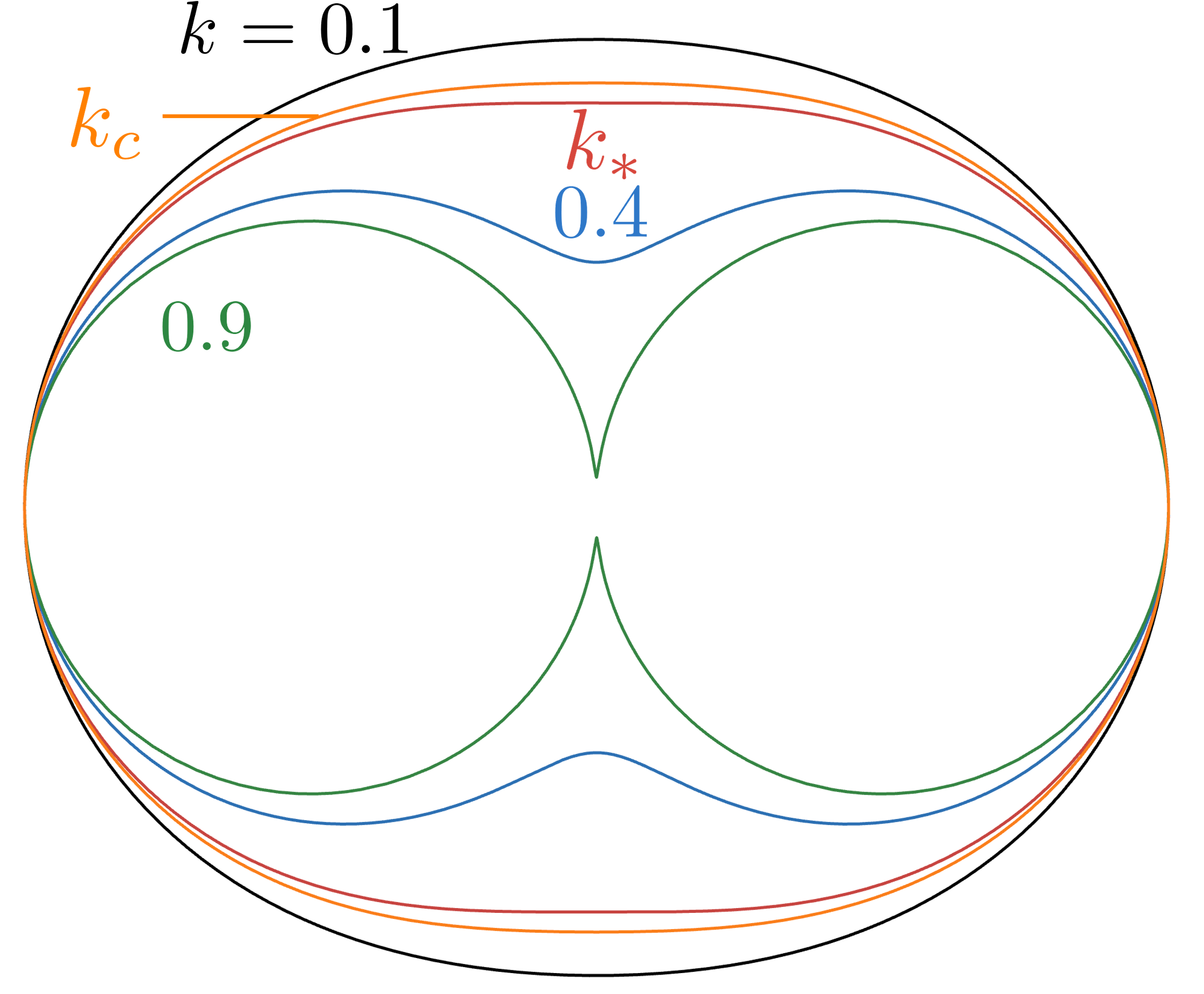}
\caption{
Dumbbell-shaped domains for different values of $k$.
}
\label{fig:dumbbell}
\end{figure}
%%%%%
\begin{thm}\label{thm:dumbbell}
 Since the perimeter \(L\) depends on \(k\), we write \(L=L(k)\). Fix \(0<w<L(k)/2\). By symmetry, it suffices to consider \(s\in[0,L(k)/4]\).
Set
\[
k_*:=\frac{5+2\sqrt{7}-2\sqrt{11+5\sqrt{7}}}{3}\sim0.148.
\]
Moreover, define
\[
g(\mu):=\frac{2+2\mu-3\mu^2}{3\mu+2},
\qquad
w_b(k):=\int_0^{\arcsin{\sqrt{g((1-k)^2/4k)}}}\rho(\theta)\,d\theta\quad \text{for } k\in(k_*,1).
\]
Then the following statements hold.
\begin{enumerate}
\item
For \(0<k<1\),
\[
s=0,\qquad s=L(k)/4
\]
are critical points of \(E\), and \(s=L(k)/4\) is a strict local minimum point.

\item
If \(0<k\le k_*\), then the only critical points of \(E\) in
\([0,L(k)/4]\) are
\[
s=0,\qquad s=L(k)/4.
\]
Moreover, \(s=0\) is a strict local maximum point.

\item
If \(k_*<k<1\) and
\[
w_b(k)<w<L(k)/2-w_b(k),
\]
then the only critical points of \(E\) in \([0,L(k)/4]\) are
\[
s=0,\qquad s=L(k)/4.
\]
Moreover, \(s=0\) is a strict local maximum point.

\item
If \(k_*<k<1\) and
\[
0<w<w_b(k)
\qquad\text{or}\qquad
L(k)/2-w_b(k)<w<L(k)/2,
\]
then \(s=0\) is a strict local minimum point. In particular, there exists at least one strict local maximum point in \((0,L(k)/4)\).

\item
The function \(w_b(k)\) is strictly increasing for \(k\in(k_*,1)\), and
\[
w_b(k)\to0\quad (k\to k_*),
\qquad
\frac{w_b(k)}{L(k)}\to\frac14\quad (k\to1).
\]
\end{enumerate}
\end{thm}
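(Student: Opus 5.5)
The plan is to work entirely in the angular variable on the unit circle, using Proposition~\ref{prop:E-for-f-rational}. Set $\varphi(\theta):=\log\rho(\theta)$ and $\psi(\theta):=1/\rho(\theta)$. Then
\[
E(s_0)=-\frac1\pi\Bigl[\varphi(\theta_1)+\varphi(\theta_2)+2\log\Bigl|2\sin\frac{\theta_2-\theta_1}{2}\Bigr|\Bigr],\qquad \frac{d\theta_i}{ds_0}=\psi(\theta_i),
\]
and differentiating gives
\[
E'(s_0)=-\frac1\pi\Bigl[\psi\varphi'(\theta_1)+\psi\varphi'(\theta_2)+\cot\frac{\theta_2-\theta_1}{2}\bigl(\psi(\theta_2)-\psi(\theta_1)\bigr)\Bigr].
\]
The explicit formula for $\rho$ shows that $\rho$ is even and $\pi$-periodic, reflecting the two reflection symmetries of $f(D)$. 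The value $s_0=0$ corresponds to $(\theta_1,\theta_2)=(-\beta,\beta)$ with $w=S(\beta)$, which is the tip of a lobe. The value $s_0=L/4$ corresponds to $(\pi/2-\beta',\pi/2+\beta')$, which is the neck. At both configurations $\psi(\theta_1)=\psi(\theta_2)$ and $\psi\varphi'$ is odd about the centre, so $E'=0$ (Remark~\ref{rem:symmetry-E}). This gives the first half of (1).

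\textbf{Second derivative at the symmetric points.} Differentiating once more at a symmetric configuration centred at $\theta_c\in\{0,\pi/2\}$ with half-angle $\beta$, and using $\psi'=-\psi\varphi'$, I expect
\[
E''=-\frac{2}{\pi}\,\psi(\theta_2)\Bigl[(\psi\varphi')'+\cot\beta\,\psi'\Bigr](\theta_2).
\]
I will insert $\rho^2=(1-k)^2\,\frac{(1-k)^2+4k\cos^2\theta}{((1-k)^2+4k\sin^2\theta)^2}$, divide by $4k$, and write everything in $x:=\sin^2\beta$ and $\mu:=(1-k)^2/4k$. The bracket should then reduce to a rational function whose sign is that of an explicit polynomial in $(x,\mu)$.

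At $\theta_c=\pi/2$ I expect the sign to be definite for all $k\in(0,1)$, giving the strict minimum in (1). At $\theta_c=0$ I expect the sign to be that of $g(\mu)-x=\frac{2+2\mu-3\mu^2}{3\mu+2}-\sin^2\beta$, up to a positive factor. This yields:
\begin{itemize}
\item a strict maximum when $\sin^2\beta>g(\mu)$;
\item a strict minimum when $\sin^2\beta<g(\mu)$.
\end{itemize}
Moreover $g(\mu)>0$ iff $3\mu^2-2\mu-2<0$, i.e. $\mu<(1+\sqrt7)/3$. Solving $(1-k)^2=4k(1+\sqrt7)/3$ gives exactly $k_*$, so $g\le 0$ for $k\le k_*$ and $g\in(0,1)$ for $k>k_*$. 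Thus $w=S(\beta)<w_b(k)$ is the minimum regime, and the interval $L/2-w_b<w<L/2$ follows from the symmetry $\beta\mapsto\pi-\beta$. This proves the local-type claims in (2)--(4). The existence of an interior strict local maximum in (4) then follows because both endpoints of $[0,L/4]$ are strict local minima, so $E$ attains its maximum on $[0,L/4]$ at an interior point.

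\textbf{Main obstacle: no other critical points in (2)--(3).} Here one must show that $E'$ has a fixed sign on $(0,L/4)$. My plan is to pass to the variables $c_i=\cos 2\theta_i$, in which $\rho^2$ is rational. I will write $\cot\frac{\theta_2-\theta_1}{2}$ via half-angle identities, clear the positive denominators, and aim to factor the numerator of $E'$ as
\[
\sin(\theta_1+\theta_2)\,\sin\bigl(\pi-(\theta_1+\theta_2)\bigr)\times P,
\]
where the first two factors vanish exactly at the trivial critical points and $P$ is a polynomial. I then expect to show $P$ has a definite sign exactly when $\sin^2\beta\ge g(\mu)$-type conditions hold, for instance by viewing $P$ as a quadratic in $\cos(\theta_1+\theta_2)$ and checking its sign at the endpoints together with its vertex. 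If the factorization does not close, the fallback is a convexity-type argument along the level set $\theta_2-\theta_1=$ const.

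\textbf{Part (5).} As $k\downarrow k_*$ we have $g\to0$, so the upper limit $\arcsin\sqrt g\to0$ and $w_b\to0$. As $k\to1$ we have $\mu\to0$ and $g\to1$, so $w_b\to S(\pi/2)=L/4$ by symmetry; hence $w_b/L\to 1/4$. For monotonicity, $g$ is decreasing in $\mu$ and $\mu$ is decreasing in $k$, so the upper limit increases with $k$. The integrand $\rho(\theta;k)$ also varies with $k$, so I will additionally check that $\partial_k\rho\ge0$ on $[0,\arcsin\sqrt{g}]$, or else control that term against the growth of the upper limit. This is the second delicate point.
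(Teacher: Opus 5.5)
Your local analysis at the symmetric configurations is the paper's argument in a different parametrization. The paper uses the centre angle $a$ with half-angle $\delta(a)$, and at a symmetric critical point $\widetilde E''=\rho^2E''$. Your formula for $E''$ is right. Carrying out the algebra gives exactly the paper's expressions, writing $\eta$ for your neck half-angle $\beta'$:
\begin{align*}
s_0=L/4:&\quad \pi\rho^2E''=\frac{2\sin^2\eta\,\bigl(3\mu^2+3\mu\cos^2\eta+5\mu+\cos^2\eta+2\bigr)}{(\mu+\cos^2\eta)(\mu+\sin^2\eta)^2},\\
s_0=0:&\quad \pi\rho^2E''=\frac{2(3\mu+2)\sin^2\beta\,\bigl(g(\mu)-\sin^2\beta\bigr)}{(\mu+\sin^2\beta)(\mu+\cos^2\beta)^2}.
\end{align*}
Your $\cot\beta$ is correct; the $\tan\eta$ in the paper's intermediate display for $\widetilde E''(\pi/2)$ is a slip. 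Your derivation of $k_*$, of the two $w$-windows, and of an interior maximum in (4) is fine. For \emph{strictness} of that maximum, add that $E$ is real-analytic and nonconstant in $s_0$, so its maximizers are isolated.

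The genuine gap is the global claim in (2) and (3) that $s=0$ and $s=L/4$ are the \emph{only} critical points. Second derivatives at the two endpoints cannot give this, and the route you sketch fails as stated, for three reasons.

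First, $\sin(\theta_1+\theta_2)\sin\bigl(\pi-(\theta_1+\theta_2)\bigr)=\sin^2(\theta_1+\theta_2)$ has double zeros, whereas $E'$ has simple zeros at the nondegenerate symmetric points. A single factor $\sin(\theta_1+\theta_2)$ already vanishes at both, since $\frac{d}{ds_0}(\theta_1+\theta_2)=\psi(\theta_1)+\psi(\theta_2)>0$ carries $\theta_1+\theta_2$ monotonically from $0$ to $\pi$.

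Second, $E'$ is not rational in $\cos2\theta_i$. The functions $\rho^2$ and $\varphi'$ are, but $\psi=1/\rho$ contains $\sqrt{\mu+\cos^2\theta}$. Clearing denominators therefore leaves $A\sqrt{\mu+\cos^2\theta_1}+B\sqrt{\mu+\cos^2\theta_2}$ with $A,B$ rational, not a polynomial whose sign you can read off as a quadratic.

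Third, and decisively, the hypothesis in (3) constrains only the tip half-angle $\beta$. Meanwhile $(\theta_1,\theta_2)$ moves along the transcendental curve $S(\theta_2)-S(\theta_1)=2w$, on which $\theta_2-\theta_1$ is not constant unless $w=L/4$. So your fallback along level sets of $\theta_2-\theta_1$ follows the wrong curve. What must be shown is that this entire curve avoids the zero set of $E'$. A useful fact: $\psi$ is increasing in $\sin^2\theta$, so $\frac{d}{ds_0}(\theta_2-\theta_1)=\psi(\theta_2)-\psi(\theta_1)$ has the sign of $\sin(\theta_2-\theta_1)\sin(\theta_1+\theta_2)$. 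Hence the half-angle moves monotonically from $\beta$ to $\eta$. For $w<L/4$ it would therefore suffice to show that $E'$ has a fixed sign on $\{0<\theta_1+\theta_2<\pi,\ \beta\le\frac{\theta_2-\theta_1}{2}<\frac{\pi}{2}\}$. No such estimate is given. The paper's appendix does not supply this step either: it computes $\widetilde E''(0)$ and $\widetilde E''(\pi/2)$ and then simply asserts the classification.

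Part (5) has a second open step. Your first option, $\partial_k\rho\ge0$ on $[0,\arcsin\sqrt g]$, is false for most $k$. Indeed, $\partial_\mu\log\rho$ has the sign of $\mu(3\sin^2\theta-1)+\sin^2\theta\cos^2\theta$. Since $\mu$ decreases in $k$, $\rho$ decreases in $k$ wherever $\sin^2\theta\ge 1/3$. And $g(\mu)>1/3$ as soon as $\mu<(1+\sqrt{17})/6$, i.e.\ $k\gtrsim 0.19$. You therefore have to prove
\[
\rho(\beta_b)\frac{d\beta_b}{dk}+\int_0^{\beta_b}\partial_k\rho\,d\theta>0,\qquad \sin^2\beta_b=g(\mu).
\]
The paper offers only \emph{direct calculation} here.

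Finally, the step $w_b\to S(\pi/2)$ as $k\to1$ is not meaningful as written, because $S$ depends on $k$ and $\rho(0)=\frac{1+k}{1-k}\to\infty$. Argue instead that $L/4-w_b=\int_{\beta_b}^{\pi/2}\rho\,d\theta\le\frac{\pi}{2}\,\frac{\sqrt{\mu(\mu+1-g)}}{\mu+g}\to0$ and that $L/4\ge|f(1)-f(i)|\ge1$.
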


\begin{proof}
See Appendix~\ref{app:proof-of-dumbbell}.
\end{proof}

We first note the role of the deformation parameter \(k\). In the circular case
\(k=0\), the domain is rotationally symmetric, and hence no preferred location
is selected by symmetry; equivalently, \(E(\cdot;w_*)\) is constant. When
\(k>0\) is small, the dumbbell-shaped domain may be regarded as a small
perturbation of the disk, and Theorem~\ref{thm:dumbbell}--(2) shows that no
nontrivial equilibrium appears in the interval \((0,L(k)/4)\).
In this regime, the deformation only selects the symmetric locations
\(s=0\) and \(s=L(k)/4\). However, once \(k\) becomes sufficiently large,
nontrivial equilibria appear between these two symmetric locations.

In particular, fix \(0<w<\pi/2\). Together with \eqref{eq:ODE2},
Theorem~\ref{thm:dumbbell} implies that a subcritical pitchfork
bifurcation occurs at
\[
k=(w_b)^{-1}(w)=:k_b(w)\in(k_*,1).
\]
The left panel of Figure~\ref{fig:sim-dumbbell} schematically illustrates this bifurcation. As predicted by the analysis, when \(k>k_b(w)\), a nontrivial unstable equilibrium appears, and the equilibria \(s=0\) and \(s=L(k)/4\) become bistable. The right panels show numerical simulation results for \eqref{eq:model} with different initial conditions. In the upper and lower panels, the solution remains near the points corresponding to \(s=0\) and \(s=L(k)/4\), respectively. Plots of \(E\) are given in Appendix~\ref{app:plots-of-potential}.
  The comparison between this theoretical results and direct numerical simulations are available in Appendix~\ref{app:comparison}.

\subsection{Case of a perforated disk}\label{subsec:hole}

Next, we compute \(E\) for a disk with one circular hole by using
Proposition~\ref{prop:Conformal-rep-of-E-annulus}. We call this domain
a perforated disk. In this subsection, \(\Gamma\) denotes only the outer
boundary, on which the surface variable \(u\) is defined. On the boundary
of the hole, \(u\) is not defined and we impose only the homogeneous
Neumann condition for the bulk variable \(v\). Since a perforated disk is
not simply connected, we use a conformal map from an annulus.

\begin{rem}
Although the formal derivation of the interface ODEs in
Section~\ref{sec:ODE} was written for the case where the boundary consists of
a single closed curve, the same derivation applies to the present perforated
disk. 
\end{rem}

\begin{prop}\label{prop:E-perforated-disk}
Let
\[
\Omega
=
\{z\in\mathbb C:\ |z|<1\}\setminus \overline{B(c,r)}
\qquad
(0<r<1,\ 0< c<1-r),
\]
where \(B(c,r)\) is the disk centered at \((c,0)\) with radius \(r\).
Let \(a,b\in(0,1)\) be the unique pair determined by
\[
c=\frac{b(1-a^2)}{1-a^2b^2},
\qquad
r=\frac{a(1-b^2)}{1-a^2b^2}.
\]
Then
\begin{align}\label{eq:hole-E}
E(s_0)
&=
C_w
-\frac{2}{\pi}\sum_{m=1}^\infty
\log\!\left(
1+
\frac{4a^{2m}(1-b^2)^2\sin^2 w}
{(1-a^{2m})^2
\bigl(1-2b\cos(s_0-w)+b^2\bigr)
\bigl(1-2b\cos(s_0+w)+b^2\bigr)}
\right).
\end{align}
\end{prop}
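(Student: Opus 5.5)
The plan is to construct an explicit Möbius map from the annulus $A_a$ onto $\Omega$ and then apply Proposition~\ref{prop:Conformal-rep-of-E-annulus}. The key fact making everything explicit is that for a Möbius map, both the $\rho$ factors and the series can be rewritten in terms of the physical boundary angles. Since $\Gamma$ is the unit circle, its arc length $s$ coincides with the polar angle, so $L=2\pi$ and $\gamma(s)=e^{is}$. For the $m$-sum I will write $q_m:=a^{2m}$ to avoid clashing with the Neumann function $Q$.

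\emph{Step 1 (the conformal map).} Take
\[
f(z)=\frac{z+b}{1+bz},\qquad 0<b<1.
\]
This map sends the unit disk onto itself and the unit circle onto itself. Since it has real coefficients, it maps $\{|z|=a\}$ onto a circle symmetric about the real axis. That circle passes through $f(a)=\frac{a+b}{1+ab}$ and $f(-a)=\frac{b-a}{1-ab}$, so its center is their average and its radius is half their difference:
\[
c=\frac{b(1-a^2)}{1-a^2b^2},\qquad r=\frac{a(1-b^2)}{1-a^2b^2}.
\]
Hence $f(A_a)=\Omega$. The hypotheses of Proposition~\ref{prop:Conformal-rep-of-E-annulus} hold because $f$ is smooth on $\overline{A_a}$ with $f'\neq0$ there. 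I would justify uniqueness of $(a,b)$ briefly, by invertibility of $(a,b)\mapsto(c,r)$ (or by the standard uniqueness of the conformal modulus); this is not needed for the formula itself.

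\emph{Step 2 (the logarithmic term is constant).} Write $\phi_i=s_i$ for the physical angles, so $e^{i\phi_i}=f(e^{i\theta_i})$. Möbius maps satisfy the two-point identity
\[
|f(z_1)-f(z_2)|^2=|f'(z_1)|\,|f'(z_2)|\,|z_1-z_2|^2 .
\]
Applying it with $z_i=e^{i\theta_i}$ gives
\[
4\rho(\theta_1)\rho(\theta_2)\sin^2\frac{\theta_2-\theta_1}{2}=|e^{i\phi_1}-e^{i\phi_2}|^2=4\sin^2 w .
\]
So the logarithmic term in Proposition~\ref{prop:Conformal-rep-of-E-annulus} is independent of $s_0$ and is absorbed into $C_w$.

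\emph{Step 3 (resumming the series).} Set $\psi=\theta_2-\theta_1$. Expand $\frac{q_1^{n}}{1-q_1^{n}}=\sum_{m\ge1}q_m^{n}$, interchange the sums (justified by absolute convergence since $0<a<1$), and use $\sum_n \frac{x^n}{n}\cos n\psi=-\frac12\log(1-2x\cos\psi+x^2)$. This turns the series into
\[
-\frac{2}{\pi}\sum_{m\ge1}\log\!\bigl((1-q_m)^2+4q_m\sin^2(\psi/2)\bigr).
\]
Factoring out $(1-q_m)^2$ produces $\sum_m\log(1-q_m)^2$, which is a convergent constant and goes into $C_w$. What remains is
\[
-\frac{2}{\pi}\sum_{m\ge1}\log\!\left(1+\frac{4q_m\sin^2(\psi/2)}{(1-q_m)^2}\right).
\]

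\emph{Step 4 (back to physical angles).} Finally I express $\sin^2(\psi/2)$ in terms of $\phi_i$ using the inverse $g(\zeta)=\frac{\zeta-b}{1-b\zeta}$ and the same two-point identity. With $|g'(e^{i\phi})|=\frac{1-b^2}{1-2b\cos\phi+b^2}$ this gives
\[
4\sin^2\frac{\psi}{2}=\frac{4(1-b^2)^2\sin^2 w}{\bigl(1-2b\cos(s_0-w)+b^2\bigr)\bigl(1-2b\cos(s_0+w)+b^2\bigr)},
\]
which is exactly the form of \eqref{eq:hole-E}.

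\emph{Expected main obstacle.} The step I expect to need the most care is bookkeeping rather than analysis. First, the Remark after Proposition~\ref{prop:E-perforated-disk} has to carry over Proposition~\ref{prop:Conformal-rep-of-E-annulus} to the situation where $u$ lives only on the outer boundary. Second, the direction and origin of the arc-length parameter must be consistent with $\Theta$: $\Theta$ is increasing and $f(1)=1$, so $s=0$ corresponds to $\theta=0$. Third, the discarded terms must genuinely not depend on $s_0$; this holds because they depend only on $a$, $b$ and $w$.
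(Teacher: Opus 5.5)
Your proposal is correct and follows the paper's proof essentially step for step: the same map $f(z)=\frac{z+b}{1+bz}$, the reduction of the logarithmic term in Proposition~\ref{prop:Conformal-rep-of-E-annulus} to the constant $-\frac{1}{\pi}\log(4\sin^2 w)$, the same double-series resummation into $-\frac{1}{2}\sum_m \log(1-2a^{2m}\cos\psi+a^{4m})$, and the same substitution for $\sin^2\frac{\theta_2-\theta_1}{2}$. Your Möbius two-point identity is a compact form of the paper's explicit identity \eqref{eq:sin^2Theta}, and your check that $f$ maps $\{|z|=a\}$ onto $\partial B(c,r)$ fills in a detail the paper leaves implicit.
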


%%%%%
\begin{figure}[t]
\centering
\includegraphics[width=0.95\textwidth]{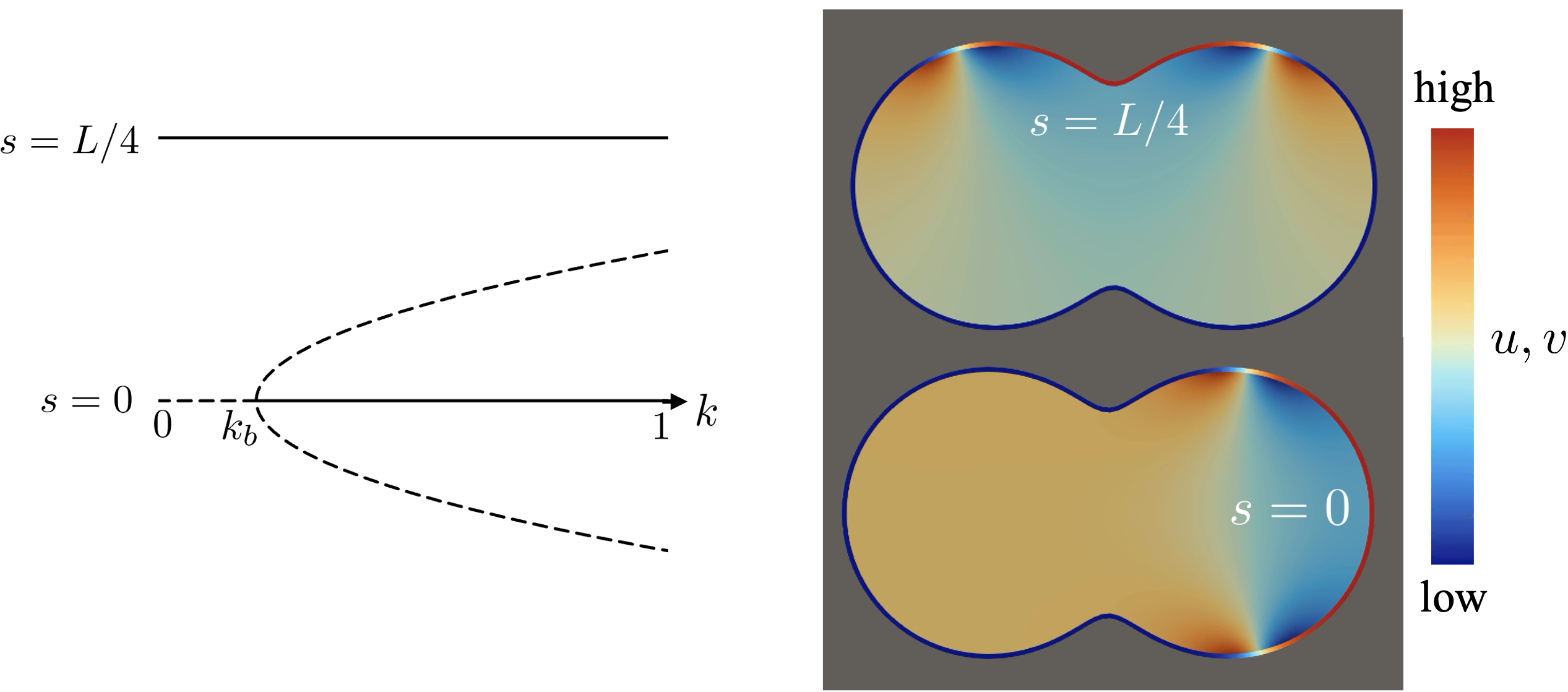}
\caption{
Left: schematic diagram of the pitchfork bifurcation in Theorem~\ref{thm:dumbbell}. Solid and dotted curves represent stable and unstable equilibria, respectively. Right: snapshots of numerical simulation results after sufficiently long time. The same value \(k=0.44\) is used in the upper and lower simulations, while only the initial condition is changed. See Appendix~\ref{app:numerical} for the details of the numerical simulations.
}
\label{fig:sim-dumbbell}
\end{figure}
%%%%%

\begin{proof}
In this case,
\[
f(z)=\frac{z+b}{1+bz}
\]
is a conformal map from
\[
A_a=\{z\in\mathbb C:\ a<|z|<1\}
\]
onto \(\Omega\). Moreover,
\[
f'(z)=\frac{1-b^2}{(1+bz)^2},
\qquad
f^{-1}(z)=\frac{z-b}{1-bz}.
\]
Hence,
\[
e^{i\Theta(s)}
=
f^{-1}(e^{is})
=
\frac{e^{is}-b}{1-be^{is}}.
\]
Also,
\[
\rho(\theta)=|f'(e^{i\theta})|
=
\frac{1-b^2}{|1+be^{i\theta}|^2}
=
\frac{1-b^2}{1+2b\cos\theta+b^2}.
\]
Therefore,
\[
\rho(\Theta(s))
=
\frac{|1-be^{is}|^2}{1-b^2}
=
\frac{1-2b\cos s+b^2}{1-b^2}.
\]
Furthermore,
\begin{align}\label{eq:sin^2Theta}
\sin^2\frac{\Theta(s_2)-\Theta(s_1)}{2}
=
\frac{(1-b^2)^2\sin^2 w}
{\bigl(1-2b\cos(s_0-w)+b^2\bigr)
 \bigl(1-2b\cos(s_0+w)+b^2\bigr)}
=
\frac{\sin^2w}{\rho(\Theta(s_1))\rho(\Theta(s_2))}.
\end{align}
Thus, by Proposition~\ref{prop:Conformal-rep-of-E-annulus}, we obtain
\begin{align*}
E(s_0)
&=
C_w
+\frac{4}{\pi}\sum_{n=1}^\infty
\frac{a^{2n}}{n(1-a^{2n})}
\cos\!\Bigl(n\bigl(\Theta(s_0+w)-\Theta(s_0-w)\bigr)\Bigr).
\end{align*}
Since \(0<a<1\), for \(0\le\phi<2\pi\) we have
\begin{align*}
\sum_{n=1}^\infty \frac{a^{2n}}{n(1-a^{2n})}\cos(n\phi)
=
\sum_{m=1}^\infty\sum_{n=1}^\infty \frac{a^{2mn}}{n}\cos(n\phi)=
-\frac12\sum_{m=1}^\infty
\log\!\bigl(1-2a^{2m}\cos\phi+a^{4m}\bigr).
\end{align*}
Therefore,
\begin{align*}
E(s)
&=
C_w
-\frac{2}{\pi}\sum_{m=1}^\infty
\log\!\Bigl(
1-2a^{2m}\cos\bigl(\Theta(s+w)-\Theta(s-w)\bigr)+a^{4m}
\Bigr).
\end{align*}
Using the identity
\[
1-2r\cos\phi+r^2=(1-r)^2+4r\sin^2\frac{\phi}{2},
\]
we obtain
\begin{align*}
&1-2a^{2m}\cos\bigl(\Theta(s+w)-\Theta(s-w)\bigr)+a^{4m}\\
&\qquad
=
(1-a^{2m})^2
\left(
1+
\frac{4a^{2m}}{(1-a^{2m})^2}
\sin^2\frac{\Theta(s+w)-\Theta(s-w)}{2}
\right).
\end{align*}
Substituting \eqref{eq:sin^2Theta} into this expression gives \eqref{eq:hole-E}.
\end{proof}

\begin{thm}\label{thm:hole}
For $0<c<1-r$, the critical points of \(E\) are classified as follows.
\begin{enumerate}
\item
If
\[
0<w<\arccos\frac{2b}{1+b^2},
\]
then the only critical points are \(s=0,\pi\). Moreover, \(s=0\) is a strict local minimum point, and \(s=\pi\) is a strict local maximum point.

\item
If
\[
\arccos\frac{2b}{1+b^2}
<
w
<
\pi-\arccos\frac{2b}{1+b^2},
\]
then the critical points are
\[
s=0,\quad \pi,\quad
\pm\arccos\!\left(\frac{1+b^2}{2b}\cos w\right).
\]
Moreover,
\[
s=\pm\arccos\!\left(\frac{1+b^2}{2b}\cos w\right)
\]
are strict local minimum points, while \(s=0,\pi\) are strict local maximum points.

\item
If
\[
\pi-\arccos\frac{2b}{1+b^2}<w<\pi,
\]
then the only critical points are \(s=0,\pi\). Moreover, \(s=\pi\) is a strict local minimum point, and \(s=0\) is a strict local maximum point.

\item
For any fixed \(0<r<1\), the quantity
\[
\arccos\frac{2b}{1+b^2}
\]
is strictly decreasing with respect to \(c\), and satisfies
\[
\arccos\frac{2b}{1+b^2}\to\frac\pi2\quad (c\to0),
\qquad
\arccos\frac{2b}{1+b^2}\to0\quad (c\to1-r).
\]
\end{enumerate}
\end{thm}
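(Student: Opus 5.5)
The plan is to reduce everything to the explicit formula \eqref{eq:hole-E} of Proposition~\ref{prop:E-perforated-disk} and show that $E$ is a strictly increasing function of a single trigonometric polynomial in $s$. Here $\Gamma$ is the unit circle, so arc length coincides with the angle. Set
\[
\beta_m:=\frac{4a^{2m}(1-b^2)^2\sin^2 w}{(1-a^{2m})^2}>0,\qquad
P(s):=\bigl(1-2b\cos(s-w)+b^2\bigr)\bigl(1-2b\cos(s+w)+b^2\bigr)>0 .
\]
Then $E(s)=\Phi(P(s))$ with $\Phi(p):=C_w-\frac{2}{\pi}\sum_{m\ge1}\log(1+\beta_m/p)$. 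Each summand $-\log(1+\beta_m/p)$ is strictly increasing in $p>0$, and the series of derivatives converges locally uniformly because $\beta_m=O(a^{2m})$. Hence $\Phi'>0$.

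Consequently, $E'(s)=\Phi'(P)P'(s)$, so the critical points of $E$ are exactly those of $P$. At such a point $E''=\Phi'(P)P''$, so strict local minima and maxima (in the sense of a nonzero second derivative, or more generally strict monotonicity on each side) transfer directly from $P$ to $E$.

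Next I will expand $P$. Write $A:=1+b^2$ and $x:=\cos s$, and use $\cos(s-w)+\cos(s+w)=2x\cos w$ together with $2\cos(s-w)\cos(s+w)=\cos2s+\cos2w$. This gives
\[
P=4b^2x^2-4bA\cos w\,x+\text{const},
\]
a convex quadratic $q(x)$ with vertex at $x^*=\frac{A\cos w}{2b}$. Differentiating, $P'(s)=-\sin s\,\bigl(8b^2\cos s-4bA\cos w\bigr)$. So the critical points are $s=0,\pi$, together with the solutions of $\cos s=x^*$. The latter exist, and are distinct from $0,\pi$, exactly when $|\cos w|<2b/A$, that is, when $w\in\bigl(\arccos\frac{2b}{1+b^2},\ \pi-\arccos\frac{2b}{1+b^2}\bigr)$. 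Since $s\mapsto\cos s$ is monotone on $[0,\pi]$ and the picture is even in $s$, the classification follows from the monotonicity of $q$ on $[-1,1]$.

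In case (1), $x^*>1$, so $q$ is strictly decreasing on $[-1,1]$. Thus $P$ is smallest at $x=1$ ($s=0$, a strict minimum) and largest at $x=-1$ ($s=\pi$, a strict maximum). Case (3) is the mirror image with $x^*<-1$. In case (2), $q$ has an interior strict minimum at $x^*$, which gives the strict minima $s=\pm\arccos x^*$. There $P''(s)=8b^2\sin^2 s>0$. At the endpoints $x=\pm1$, $q$ is strictly larger than at nearby interior values, so $s=0,\pi$ are strict local maxima. Strictness at $s=0,\pi$ comes from $q'(\pm1)\neq0$ in the open ranges, because $P(s)-P(0)=q'(1)(\cos s-1)+O((\cos s-1)^2)$.

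For (4), I will identify $b$ geometrically instead of inverting the parametrization directly. The Möbius map $f^{-1}(z)=(z-b)/(1-bz)$ sends the hole circle to $|z|=a$. Hence $b$ and $1/b$ are symmetric with respect to $\partial B(c,r)$ as well as the unit circle, which gives $(b-c)(1/b-c)=r^2$. Equivalently,
\[
b+\frac1b=c+\frac{1-r^2}{c}.
\]
One can check this against the stated relations for $c,r$ in terms of $a,b$. The right-hand side is strictly decreasing in $c$ on $(0,1-r)$, since $c<1-r<\sqrt{1-r^2}$. Because $b\mapsto b+1/b$ is decreasing on $(0,1)$, $b$ is strictly increasing in $c$. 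The right-hand side tends to $\infty$ as $c\to0$ and to $(1-r)+(1+r)=2$ as $c\to1-r$, so $b\to0$ and $b\to1$ respectively. Finally, $\arccos\frac{2b}{1+b^2}=\arccos\frac{2}{b+1/b}=\arccos\frac{2c}{1+c^2-r^2}$, which is strictly decreasing in $c$ with limits $\pi/2$ and $0$.

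The main obstacle I expect is this last identification of $b$ in terms of $(c,r)$. If the symmetry argument proves awkward, I will instead eliminate $a$ directly from the two given relations. The remaining steps reduce to the quadratic-in-$\cos s$ observation.
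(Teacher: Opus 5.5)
Your proposal is correct and follows the paper's proof. The paper also writes $E$ as a strictly increasing function of $D(s)=\bigl(1-2b\cos(s-w)+b^2\bigr)\bigl(1-2b\cos(s+w)+b^2\bigr)=4b^2(\cos s-b_0)^2+(1-b^2)^2\sin^2 w$, with $b_0=\frac{1+b^2}{2b}\cos w$, and reads the classification off $D'(s)=8b^2\sin s\,(b_0-\cos s)$. For part (4) the paper says only ``direct calculation'', and your identity $b+1/b=c+(1-r^2)/c$ supplies that calculation. It does follow from the stated relations, for instance via $c^2-r^2=(b^2-a^2)/(1-a^2b^2)$, and it yields $\arccos\frac{2b}{1+b^2}=\arccos\frac{2c}{1+c^2-r^2}$.
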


\begin{proof}
See Appendix~\ref{app:proof-of-hole}.
\end{proof}

%%%%%
\begin{figure}[t]
\centering
\includegraphics[width=0.85\textwidth]{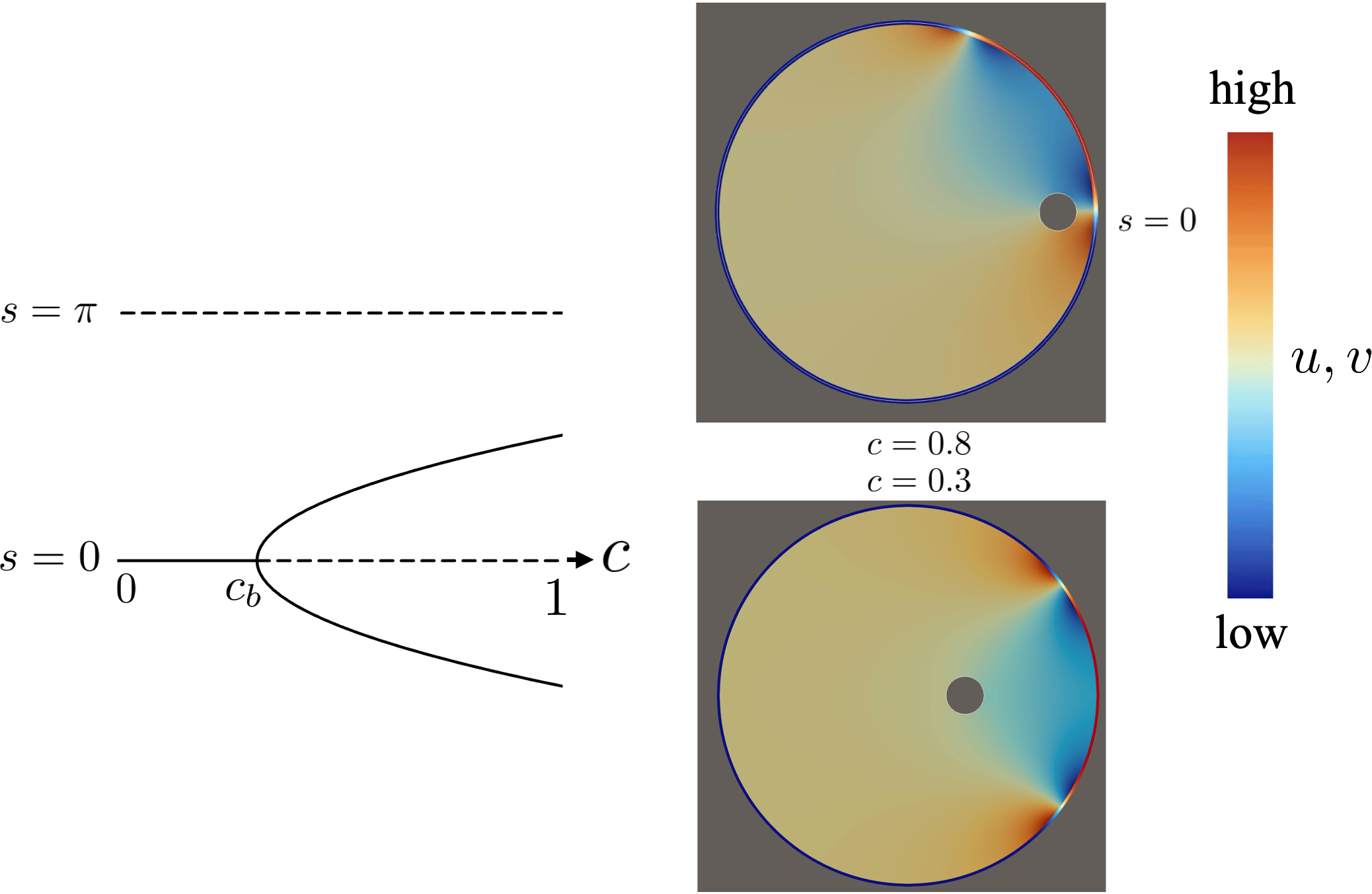}
\caption{
Left: schematic diagram of the pitchfork bifurcation in Theorem~\ref{thm:hole}. Solid and dotted curves represent stable and unstable equilibria, respectively. Right: snapshots of numerical simulation results after sufficiently long time. The same initial condition and the same parameter \(r=0.1\) are used in the upper and lower simulations, while only the value of \(c\) is changed. See Appendix~\ref{app:numerical} for the details of the numerical simulations.
}
\label{fig:sim-hole}
\end{figure}
%%%%%

We first note the role of the eccentricity of the hole. In the case \(c=0\), 
the domain is a concentric annulus, and hence no preferred location is selected 
by symmetry; equivalently, \(E(\cdot;w_*)\) is constant. When \(c>0\) is small, 
the eccentric annulus may be regarded as a small perturbation of the concentric 
one, and Theorem~\ref{thm:hole} shows that no nontrivial stable equilibrium 
appears for fixed \(0<w<\pi/2\). In this regime, the eccentricity only selects 
the symmetric equilibrium \(s=0\) as a stable location. However, once \(c\) 
becomes sufficiently large, a pair of nontrivial stable equilibria bifurcates 
from \(s=0\).

Together with \eqref{eq:ODE2}, this result shows that, for fixed
\(0<r<1\) and \(0<w<\pi/2\),
there exists \(0<c_b(r,w)<1-r\) such that a supercritical pitchfork bifurcation 
occurs at \(c=c_b(r,w)\). For \(c>c_b(r,w)\), nontrivial stable equilibria
\[
s=\pm\arccos\!\left(\frac{1+b^2}{2b}\cos w\right)
\]
exist. The left panel of Figure~\ref{fig:sim-hole} schematically illustrates this bifurcation. The right panels show numerical simulation results for \eqref{eq:model} on the perforated disk. As predicted by the analysis, the upper-right panel shows that the solution remains near the point corresponding to
\[
s=\arccos\!\left(\frac{1+b^2}{2b}\cos w\right),
\]
whereas the lower-right panel shows that the solution remains near \(s=0\). Plots of \(E\) are given in Appendix~\ref{app:plots-of-potential}. 
  The comparison between this theoretical results and direct numerical simulations are available in Appendix~\ref{app:comparison}.

\section{Discussion}\label{sec:conclusion}

In this paper, we studied the effect of domain geometry on pulse solutions of \eqref{eq:model}. By formally deriving equations of motion for the interfaces of pulse solutions of \eqref{eq:model} on two different time scales, we obtained a condition under which wave-pinning occurs stably. We further showed that the pulse location follows the gradient flow of a potential function determined by the domain geometry and the half-width of the pulse. In addition, by specifying concrete domain geometries and analyzing the resulting reduced equations, we classified the pulse dynamics and showed, in particular, that nontrivial dynamics can arise through supercritical or subcritical pitchfork bifurcations. The examples of the dumbbell-shaped domain and the perforated disk show that nontrivial pulse dynamics does not arise from small perturbations of the disk or the concentric annulus, but appears only when the symmetry breaking is sufficiently strong. We also supported and visualized the analytical results by numerical simulations.

Since \eqref{eq:BS} involves variables defined both in the bulk and on the boundary, it is not straightforward to apply conventional perturbation techniques. To overcome this difficulty, we combined two approaches. The first is a conventional singular perturbation approach: for the boundary variable, we decomposed the problem into inner and outer regions near the transition layers, and constructed the inner solution using stretched coordinates. The second concerns the treatment of the bulk variable. We used an integral representation in terms of the Green's function and the boundary variable, which allowed us to compute the bulk contribution through detailed asymptotic calculations.

Boundary conditions in which variables evolve in time on the boundary, as in \eqref{eq:BS}, are called dynamic boundary conditions (DBCs). They have recently attracted considerable attention because they naturally describe physical and biological phenomena involving state variables evolving on boundaries \cite{diez2024,duda2023,handy2021,scheel2021}. In this paper, we investigated the dynamics of a problem with a DBC by using singular perturbation methods. The analytical approach developed here is not restricted to \eqref{eq:BS}; it is expected to provide a basis for applying singular perturbation methods to other problems with DBCs, such as moving boundary problems with dynamic boundary conditions \cite{caetano2025,macdonald2016} and the Cahn--Hilliard equation with dynamic boundary conditions \cite{goldstein2011,kagawa2024}. Since many existing analytical methods cannot be directly applied to problems with DBCs, their mathematical analysis is often difficult. The framework proposed in this paper is therefore expected to be useful for future singular perturbation analysis of problems involving DBCs.

Finally, we conclude this section by discussing future research directions. In this paper, we formally derived the equations of motion for pulse solutions and discussed the possible locations and stability of stationary pulse solutions. A rigorous proof of the existence and stability of such stationary pulse solutions remains future work. For such a rigorous analysis, it will be necessary to refine the asymptotic calculations carried out in this paper and to construct approximate solutions more precisely. In particular, unlike standard singular perturbation problems, the logarithmic singularity of the Green's function causes algebraic decay of the inner solution, and therefore matching in the overlap region must be handled carefully. Moreover, proving the existence and stability of exact solutions requires a detailed spectral analysis of the linearized operator around the approximate solution. This is an important problem for future study.

In this study, we computed the Neumann Green's function using conformal mappings. However, for domains in which the Neumann Green's function has already been obtained in the other ways such as coordinate transformations or perturbation methods, the potential function \(E\) can be computed directly. For example, the Neumann Green's function for an elliptical domain was derived in \cite[(5.21a)]{Iyaniwura2021}, while that for a perturbed disk was obtained in \cite[Appendix A]{pillay2010}. It would be interesting to investigate the pulse dynamics in these geometries using these explicit representations. Moreover, a fast and accurate numerical method for computing Neumann Green's functions in general two-dimensional domains, based on a boundary integral formulation, has recently been proposed in \cite{chakraborty2025a}. Combining this numerical method with the results of the present study would make it possible to investigate pulse dynamics in general two-dimensional domains.

A possible extension of the present study is to investigate the case where \(\Omega\) is a three-dimensional domain. The three-dimensional problem is more challenging than the two-dimensional case considered here, since the singularity structure of the Green's function is different and the interface becomes a free boundary. In this connection, for the shadow system corresponding to \(D\to\infty\), the interface evolution has been formally shown to be governed by an area-preserving geodesic curvature flow on the surface \cite{miller2023}. When \(D\) is finite, identifying the geometric quantities that determine the interface dynamics would be a challenging but interesting problem. Recently, a numerical method for computing the Neumann Green's function in three dimensions has also been proposed, in which difficulties associated with its singularity structure and anisotropic effects arising from the principal curvatures are appropriately treated. This development may provide an important tool for the numerical investigation of the corresponding three-dimensional problem.

In addition, the biological interpretation of the results obtained in this paper is an interesting issue. The system \eqref{eq:BS} has been used as a bulk--surface model describing membrane--crytosol shuttling of Cdc42/Rho GTPase and cell polarity formation in the yeast \textit{Saccharomyces cerevisiae}. The results of this study show that domain geometry can affect the selection of localized patterns on the membrane and, in some cases, can generate bistability or nontrivial stable locations. Therefore, an important future direction is to compare these results with experiments and investigate how cell shape affects the position selection of Cdc42 clusters and polarity formation.

\section*{Statements and Declarations}

\noindent\textbf{Funding}
This work was supported by JST SPRING, Grant Number JPMJSP2119.

\noindent\textbf{Competing interests}
The author declares no competing interests.

\noindent\textbf{Ethics approval and consent to participate}
Not applicable.

\noindent\textbf{Consent for publication}
Not applicable.

\noindent\textbf{Data availability}
The data generated during the current study are available from the author upon reasonable request.

\noindent\textbf{Materials availability}
Not applicable.

\noindent\textbf{Use of generative AI}
OpenAI's ChatGPT was used for English-language editing and as a discussion tool during the preparation of this manuscript. In particular, discussions with ChatGPT helped the author identify the connection between the potential function \(E\) and the excursion Poisson kernel stated in Theorem~\ref{cor:Poisson}. All mathematical statements, derivations, references, and conclusions were independently verified by the author.

\appendix

\section{Plots of the potential \(E\)}\label{app:plots-of-potential}
Plots of the potential \(E\) are given in Figure~\ref{fig:E-plot}.
\begin{figure}[t]
\centering
\includegraphics[width=0.95\textwidth]{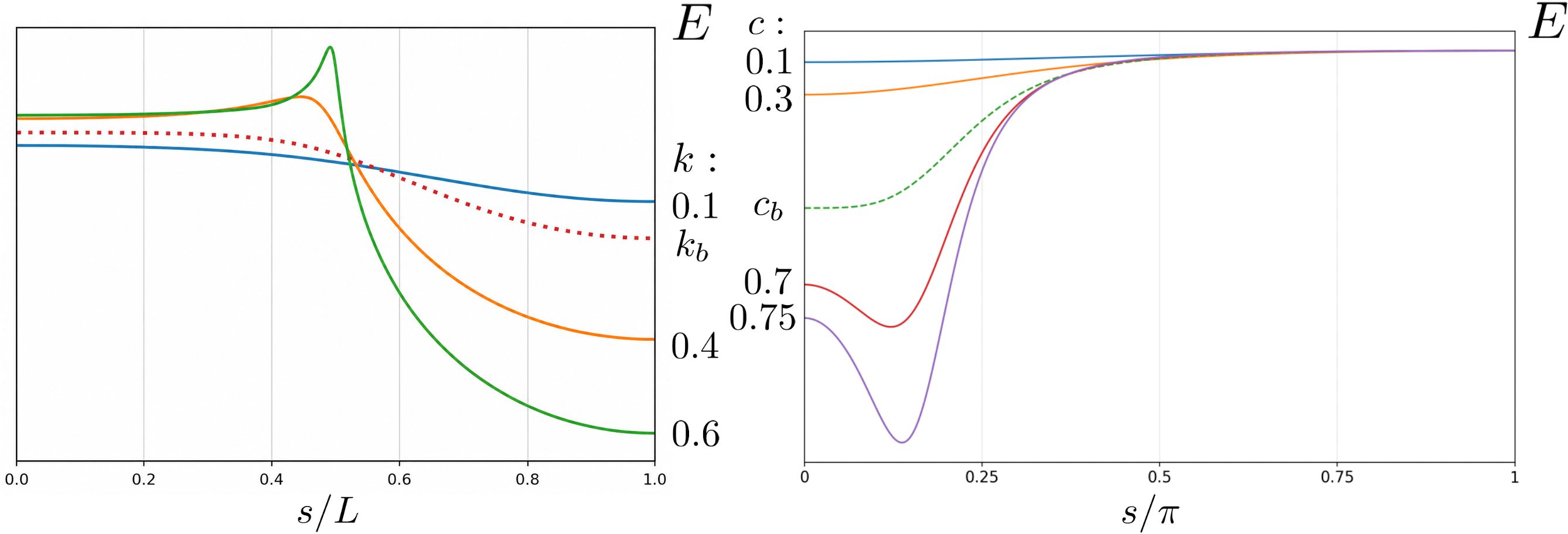}
\caption{
Left: plot of \(E(s_0;0.7)\) for the dumbbell-shaped domain considered in Section~\ref{subsec:dumbbell}. Here $k_b(0.7)\sim 0.181$.
Right: plot of \(E(s_0;0.5)\) for the perforated disk considered in Section~\ref{subsec:hole}.
Here, \(r=0.1\) is fixed, and in this case \(c_b(0.1,0.5)\sim0.58\).
}\label{fig:E-plot}
\end{figure}

\section{Comparison between the theoretical predictions and numerical simulations}\label{app:comparison}

Figure~\ref{fig:comparison} compares the trajectories of the pulse centers predicted by numerical integration of the reduced ODE~\eqref{eq:ODE2} with those obtained from direct numerical simulations of \eqref{eq:model}. The blue solid curves represent the direct numerical simulations, whereas the red dashed curves represent the predictions of the reduced ODE. The black dotted lines indicate the theoretically predicted equilibrium positions. To integrate the ODE, we evaluated the potential functions given in Propositions~\ref{prop:E-for-f-rational} and~\ref{prop:E-perforated-disk} and employed the classical fourth-order Runge--Kutta method.

For the dumbbell-shaped domain, the blue solid and red dashed curves are in excellent agreement throughout the time interval considered. This demonstrates that the reduced ODE~\eqref{eq:ODE2} accurately predicts the motion of the pulse center observed in the direct numerical simulations.

For the perforated disk, a discrepancy is observed between the equilibrium positions predicted by the reduced ODE and obtained from the direct numerical simulations. The numerical solution approaches \(s_{\mathrm{num}}\approx0.621\), whereas the reduced ODE predicts \(s_{\mathrm{theory}}\approx0.574\). Thus,
\[
  \lvert s_{\mathrm{num}}-s_{\mathrm{theory}}\rvert
  \approx 0.047
  \approx 1.47\eps
  = O(\eps).
\]
This discrepancy is consistent with the expected \(O(\eps)\) accuracy of the leading-order asymptotic approximation. Moreover, the discrepancy remains essentially unchanged under mesh refinement, indicating that it is not primarily caused by the spatial discretization.

\begin{figure}[t]
\centering
\includegraphics[width=1.0\textwidth]{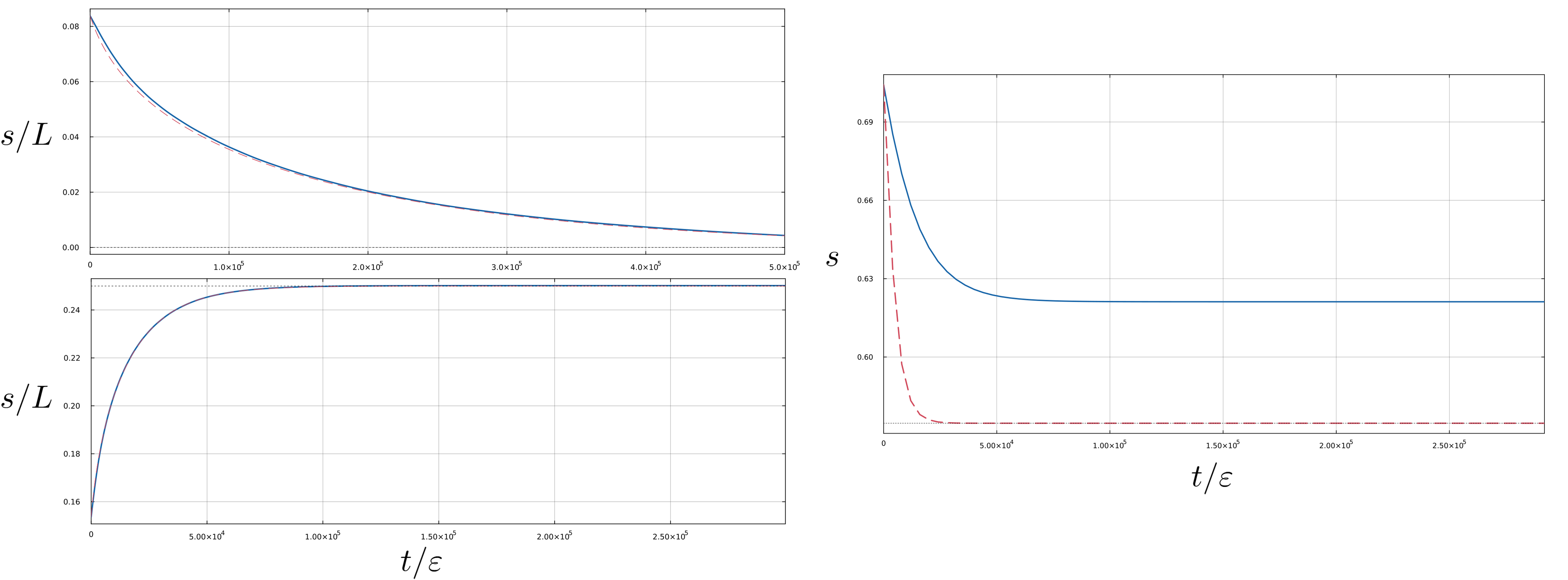}
\caption{
Left: The dumbbell case. The settings are same as Figure~\ref{fig:sim-dumbbell}. Here $L\sim5.36.$
Right: The perforated disk case. The settings are same as Figure~\ref{fig:sim-hole}
.}\label{fig:comparison}
\end{figure}

\section{Numerical method}\label{app:numerical}
 
We used the numerical scheme in \cite{cusseddu2019} for numerically solving \eqref{eq:model}, which is based on the bulk--surface finite element method proposed in \cite{madzvamuse2016}. 

We briefly describe the numerical method. The bulk–surface finite element method was employed for the spatial discretization. The bulk domain \(\Omega\) and its boundary \(\Gamma\) were discretized by bulk and surface meshes, respectively. The bulk variable \(v\) and the surface variable \(u\) were approximated using continuous piecewise-linear finite element functions on their respective meshes.
First, a predictor for the surface variable was computed using an IMEX method, in which the diffusion term was treated implicitly and the reaction term explicitly. The predicted surface solution was then used to update the bulk variable using the Crank–Nicolson method. Finally, the surface variable was corrected using the Crank–Nicolson method. This scheme preserves the discrete total mass.

The simulations were implemented in \texttt{Julia} using the finite element library \texttt{Gridap.jl}\cite{badia2020,verdugo2022}. The computational meshes were generated by \texttt{GridapGmsh}.
The code used for numerical simulations are available at \\ \url{https://github.com/RikuWatanabe-git/Geometry-induced-pulse-dynamics}.

For the nonlinear reaction term, we used
\[
f(u,v)=\Bigl(k_0+\gamma_0\frac{u^2}{1+u^2}\Bigr)v-u,
\qquad
\gamma_0>8k_0>0.
\]
In all numerical simulations, the model parameters were set to
\[
\eps^2=0.001,\qquad D=1,\qquad k_0=0.05,\qquad \gamma_0=0.79.
\]

\section{Proof of Corollary~\ref{cor:Poisson}}\label{app:poisson}
We prove Theorem~\ref{cor:Poisson} using Proposition~\ref{prop:Conformal-rep-of-E}.
Here, $\Omega$ is assumed to be a simply connected domain with smooth boundary.
We use the same notation as in Proposition~\ref{prop:Conformal-rep-of-E}.
By \cite[Example~2.12]{kozdron2005}, the excursion Poisson kernel on the unit disk satisfies
\begin{align}\label{eq:poisson-on-disk}
    P_{\partial D}(e^{i\theta_1},e^{i\theta_2})
    =
    \frac1\pi\frac1{|e^{i\theta_1}-e^{i\theta_2}|^2}
    =
    \frac{1}{4\pi\sin^2\frac{\theta_2-\theta_1}{2}}.
\end{align}
Moreover, by \cite[Proposition~2.11]{kozdron2005}, the excursion Poisson kernel is conformally invariant, and hence
\begin{align*}
    P_{\partial D}(e^{i\theta_1},e^{i\theta_2})
    &=
    |f'(e^{i\theta_1})|
    |f'(e^{i\theta_2})|
    P_{\partial\Omega}
    \bigl(
        f(e^{i\theta_1}),
        f(e^{i\theta_2})
    \bigr)
    \\
    &=
    \rho(\theta_1)\rho(\theta_2)
    P_{\partial\Omega}(x_1,x_2).
\end{align*}
Therefore,
\begin{align}
    P_{\partial\Omega}(x_1,x_2)
    =
    \frac{1}
    {4\pi
    \sin^2\frac{\theta_2-\theta_1}{2}
    \rho(\theta_1)\rho(\theta_2)}.
\end{align}
By Proposition~\ref{prop:Conformal-rep-of-E}, the right-hand side is equal to
$\pi^{-1}e^{\pi E(s_0;w_*)}$.
This proves \eqref{eq:E-Poisson}.

\section{Proof of Theorem~\ref{thm:dumbbell}}\label{app:proof-of-dumbbell}
Set $\mu:=(1-k)^2/4k$.
Then
\begin{align}\label{eq:rho-eq}
\rho(\theta)=\sqrt{\mu}\,\frac{\sqrt{\mu+\cos^2\theta}}{\mu+\sin^2\theta}.
\end{align}
In particular,
\[
\rho(-\theta)=\rho(\theta),\qquad
\rho(\pi-\theta)=\rho(\theta),\qquad
\rho(\theta+\pi)=\rho(\theta),
\qquad(\theta\in[0,2\pi)).
\]
For each \(a\in[0,2\pi)\), let \(\delta=\delta(a)\in(0,\pi)\) be the solution of
\[
\int_{a-\delta(a)}^{a+\delta(a)}\rho(\theta)\,d\theta=2w.
\]
Then we can write
\[
\theta_1=a-\delta(a),\qquad
\theta_2=a+\delta(a).
\]
The corresponding coordinate $s$ in the arc-length variable is given by
\[
s(a)=\frac{S(a-\delta(a))+S(a+\delta(a))}{2}.
\]
Hence
\[
\widetilde E(a)
:=
E(s(a))
=
-\frac1\pi\log\!\Bigl(
4\,\rho(a-\delta(a))\rho(a+\delta(a))\sin^2\delta(a)
\Bigr).
\]
Moreover,
\[
\frac{ds}{da}
=
\frac{2\,\rho(a-\delta(a))\rho(a+\delta(a))}
{\rho(a-\delta(a))+\rho(a+\delta(a))}
>0.
\]
Therefore, the local maxima and local minima of \(\widetilde E\) with respect to \(a\)
correspond one-to-one to those of \(E\) with respect to \(s\).
Also, by Remark~\ref{rem:symmetry-E}, we know that
$s=0, L/4$ are critical points of \(E\).
Let $\eta:=\delta(\pi/2)\in(0,\pi/2)$.
Since
$\rho(\pi/2+\theta)=\rho(\pi/2-\theta)$,
we have
\[
\delta'\Bigl(\frac{\pi}{2}\Bigr)=0,
\qquad
\delta''\Bigl(\frac{\pi}{2}\Bigr)
=
\frac{\rho'(\frac{\pi}{2}-\eta)}
{\rho(\frac{\pi}{2}-\eta)}.
\]
Using these identities and differentiating \(\widetilde E\) twice, we obtain
\[
\pi\,\widetilde E''\Bigl(\frac{\pi}{2}\Bigr)
=
-2\frac{\rho''(\frac{\pi}{2}-\eta)}{\rho(\frac{\pi}{2}-\eta)}
+4\left(
\frac{\rho'(\frac{\pi}{2}-\eta)}{\rho(\frac{\pi}{2}-\eta)}
\right)^2
-2\tan\eta\,
\frac{\rho'(\frac{\pi}{2}-\eta)}{\rho(\frac{\pi}{2}-\eta)}.
\]
Substituting \eqref{eq:rho-eq} and simplifying, we obtain
\[
\pi\,\widetilde E''\Bigl(\frac{\pi}{2}\Bigr)
=
\frac{
2\sin^2 \eta
}{
(\mu+\cos^2\eta)(\mu+\sin^2\eta)^2
}
\Bigl(
3\mu^2+3\mu\cos^2\eta+5\mu+\cos^2\eta+2
\Bigr)>0.
\]
Thus, \(s=L/4\) is a strict local minimum point for all \(0< k<1\).
Next, set $\theta_*:=\delta(0)\in(0,\pi)$.
Then
\[
w=\int_0^{\theta_*}\rho(\theta)\,d\theta.
\]
By the same calculation as above, we obtain
\[
\pi\,\widetilde E''(0)
=
-2\frac{\rho''(\theta_*)}{\rho(\theta_*)}
+4\left(
\frac{\rho'(\theta_*)}{\rho(\theta_*)}
\right)^2
+2\cot\theta_*\,
\frac{\rho'(\theta_*)}{\rho(\theta_*)}.
\]
Substituting the explicit expression for \(\rho\) and simplifying, we get
\[
\pi\,\widetilde E''(0)
=
\frac{
2(3\mu+2)\sin^2\theta_*
}{
(\mu+\sin^2\theta_*)(\mu+\cos^2\theta_*)^2
}
\Bigl(
-\sin^2\theta_*+g(\mu)
\Bigr).
\]
Therefore, the sign of \(\widetilde E''(0)\) coincides with the sign of
$-\sin^2\theta_*+g(\mu)$.
A direct calculation shows that
\[
g(\mu)\le0
\quad\text{if and only if}\quad
k\le k_*.
\]
This gives the classification stated in Theorem~\ref{thm:dumbbell}. The limiting behavior of \(w_b(k)\) follows by direct calculation.

\section{Proof of Theorem~\ref{thm:hole}}\label{app:proof-of-hole}
Set
\[
D(s)
:=
\bigl(1-2b\cos(s-w)+b^2\bigr)
\bigl(1-2b\cos(s+w)+b^2\bigr).
\]
By Proposition~\ref{prop:E-perforated-disk}, we have
\[
E(s)
=
C_w
-\frac{2}{\pi}\sum_{m=1}^\infty
\log\!\left(
1+\frac{4a^{2m}(1-b^2)^2\sin^2 w}{(1-a^{2m})^2D(s)}
\right).
\]
Hence, \(E\) is a strictly increasing function of \(D(s)\).

We next rewrite \(D(s)\). Set
\[
b_0:=\frac{1+b^2}{2b}\cos w.
\]
Then
\begin{align*}
D(s)
&=
\bigl(1+b^2-2b\cos(s-w)\bigr)
\bigl(1+b^2-2b\cos(s+w)\bigr)\\
&=
\bigl(2b\cos s-(1+b^2)\cos w\bigr)^2
+(1-b^2)^2\sin^2 w\\
&=
4b^2(\cos s-b_0)^2+(1-b^2)^2\sin^2w.
\end{align*}
Therefore,
\[
D'(s)
=
8b^2\sin s\,\bigl(b_0-\cos s\bigr).
\]
Since \(E\) is a strictly increasing function of \(D(s)\), the critical points and their stability are determined by \(D'(s)\). The classification in Theorem~\ref{thm:hole} then follows from the above formula.
The monotonicity and the limiting behavior of
$\arccos\frac{2b}{1+b^2}$
with respect to \(c\) are obtained by direct calculation.

\bibliographystyle{plainnat}
\bibliography{reference}

@article{alfaro2008,
  title = {The Singular Limit of the {{Allen}}--{{Cahn}} Equation and the {{FitzHugh}}--{{Nagumo}} System},
  author = {Alfaro, Matthieu and Hilhorst, Danielle and Matano, Hiroshi},
  year = 2008,
  month = jul,
  journal = {Journal of Differential Equations},
  volume = {245},
  number = {2},
  pages = {505--565},
  doi = {10.1016/j.jde.2008.01.014},
  langid = {english}
}

@article{badia2020,
  title = {Gridap: {{An}} Extensible {{Finite Element}} Toolbox in {{Julia}}},
  shorttitle = {Gridap},
  author = {Badia, Santiago and Verdugo, Francesc},
  year = 2020,
  month = aug,
  journal = {Journal of Open Source Software},
  volume = {5},
  number = {52},
  pages = {2520},
  doi = {10.21105/joss.02520},
  langid = {english}
}

@article{borgqvist2021,
  title = {Cell Polarisation in a Bulk-Surface Model Can Be Driven by Both Classic and Non-Classic {{Turing}} Instability},
  author = {Borgqvist, Johannes and Malik, Adam and Lundholm, Carl and Logg, Anders and Gerlee, Philip and Cvijovic, Marija},
  year = 2021,
  month = feb,
  journal = {npj Systems Biology and Applications},
  volume = {7},
  number = {1},
  pages = {13},
  publisher = {Nature Publishing Group},
  doi = {10.1038/s41540-021-00173-x},
  langid = {english}
}

@article{caetano2025,
  title = {Bulk-Surface Systems on Evolving Domains},
  author = {Caetano, Diogo and Elliott, Charles M. and Tang, Bao Quoc},
  year = 2025,
  month = oct,
  journal = {Journal of Evolution Equations},
  volume = {25},
  number = {4},
  pages = {103},
  doi = {10.1007/s00028-025-01130-5},
  langid = {english}
}

@misc{chakraborty2025a,
  title = {Fast Integral Methods for the {{Neumann Green}}'s Function: Applications to Capture and Signaling Problems in Two Dimensions},
  shorttitle = {Fast Integral Methods for the {{Neumann Green}}'s Function},
  author = {Chakraborty, Sanchita and Hoskins, Jeremy and Lindsay, Alan E.},
  year = 2025,
  month = nov,
  number = {arXiv:2511.08458},
  eprint = {2511.08458},
  primaryclass = {math.NA},
  publisher = {arXiv},
  doi = {10.48550/arXiv.2511.08458},
  archiveprefix = {arXiv}
}

@article{chen1992,
  title = {Generation and Propagation of Interfaces for Reaction-Diffusion Equations},
  author = {Chen, Xinfu},
  year = 1992,
  month = mar,
  journal = {Journal of Differential Equations},
  volume = {96},
  number = {1},
  pages = {116--141},
  doi = {10.1016/0022-0396(92)90146-E},
  langid = {english}
}

@article{chen2011,
  title = {The {{Stability}} and {{Dynamics}} of {{Localized Spot Patterns}} in the {{Two-Dimensional Gray}}--{{Scott Model}}},
  author = {Chen, W. and Ward, M. J.},
  year = 2011,
  month = jan,
  journal = {SIAM Journal on Applied Dynamical Systems},
  volume = {10},
  number = {2},
  pages = {582--666},
  doi = {10.1137/09077357X},
  langid = {english}
}

@article{cusseddu2019,
  title = {A Coupled Bulk-Surface Model for Cell Polarisation},
  author = {Cusseddu, D. and {Edelstein-Keshet}, L. and Mackenzie, J.A. and Portet, S. and Madzvamuse, A.},
  year = 2019,
  month = nov,
  journal = {Journal of Theoretical Biology},
  volume = {481},
  pages = {119--135},
  doi = {10.1016/j.jtbi.2018.09.008},
  langid = {english}
}

@article{cusseddu2022,
  title = {Numerical Investigations of the Bulk-Surface Wave Pinning Model},
  author = {Cusseddu, Davide and Madzvamuse, Anotida},
  year = 2022,
  month = dec,
  journal = {Mathematical Biosciences},
  volume = {354},
  pages = {108925},
  doi = {10.1016/j.mbs.2022.108925},
  langid = {english}
}

@article{diez2024,
  title = {Turing {{Pattern Formation}} in {{Reaction-Cross-Diffusion Systems}} with a {{Bilayer Geometry}}},
  author = {Diez, Antoine and Krause, Andrew L. and Maini, Philip K. and Gaffney, Eamonn A. and {Seirin-Lee}, Sungrim},
  year = 2024,
  month = feb,
  journal = {Bulletin of Mathematical Biology},
  volume = {86},
  number = {2},
  pages = {13},
  doi = {10.1007/s11538-023-01237-1},
  langid = {english}
}

@article{duda2023,
  title = {Modelling of Surface Reactions and Diffusion Mediated by Bulk Diffusion},
  author = {Duda, Fernando P. and Forte Neto, Francisco S. and Fried, Eliot},
  year = 2023,
  month = dec,
  journal = {Philosophical Transactions of the Royal Society A: Mathematical, Physical and Engineering Sciences},
  volume = {381},
  number = {2263},
  pages = {20220367},
  doi = {10.1098/rsta.2022.0367},
  langid = {english}
}

@article{giese2015,
  title = {Influence of Cell Shape, Inhomogeneities and Diffusion Barriers in Cell Polarization Models},
  author = {Giese, Wolfgang and Eigel, Martin and Westerheide, Sebastian and Engwer, Christian and Klipp, Edda},
  year = 2015,
  month = nov,
  journal = {Physical Biology},
  volume = {12},
  number = {6},
  pages = {066014},
  doi = {10.1088/1478-3975/12/6/066014},
  langid = {english}
}

@article{goehring2011,
  title = {Polarization of {{PAR Proteins}} by {{Advective Triggering}} of a {{Pattern-Forming System}}},
  author = {Goehring, Nathan W. and Trong, Philipp Khuc and Bois, Justin S. and Chowdhury, Debanjan and Nicola, Ernesto M. and Hyman, Anthony A. and Grill, Stephan W.},
  year = 2011,
  month = nov,
  journal = {Science},
  volume = {334},
  number = {6059},
  pages = {1137--1141},
  publisher = {American Association for the Advancement of Science},
  doi = {10.1126/science.1208619}
}

@article{goldstein2011,
  title = {A {{Cahn}}--{{Hilliard}} Model in a Domain with Non-Permeable Walls},
  author = {Goldstein, Gis{\`e}le Ruiz and Miranville, Alain and Schimperna, Giulio},
  year = 2011,
  month = apr,
  journal = {Physica D: Nonlinear Phenomena},
  volume = {240},
  number = {8},
  pages = {754--766},
  doi = {10.1016/j.physd.2010.12.007}
}

@article{gomez2023,
  title = {Front Propagation in the Shadow Wave-Pinning Model},
  author = {Gomez, Daniel and Lam, King-Yeung and Mori, Yoichiro},
  year = 2023,
  month = may,
  journal = {Journal of Mathematical Biology},
  volume = {86},
  number = {5},
  pages = {72},
  doi = {10.1007/s00285-023-01908-6},
  langid = {english}
}

@article{hale2005,
  title = {A {{Lyapunov-Schmidt}} Method for Transition Layers in Reaction-Diffusion Systems},
  author = {Hale, Jack K. and Sakamoto, Kunimochi},
  year = 2005,
  month = jul,
  journal = {Hiroshima Mathematical Journal},
  volume = {35},
  number = {2},
  pages = {205--249},
  publisher = {Hiroshima University, Mathematics Program},
  doi = {10.32917/hmj/1150998273},
  langid = {english}
}

@article{handy2021,
  title = {Revising {{Berg-Purcell}} for Finite Receptor Kinetics},
  author = {Handy, Gregory and Lawley, Sean D.},
  year = 2021,
  month = jun,
  journal = {Biophysical Journal},
  volume = {120},
  number = {11},
  pages = {2237--2248},
  doi = {10.1016/j.bpj.2021.03.021},
  langid = {english}
}

@article{hausberg2018,
  title = {Well-Posedness and Fast-Diffusion Limit for a Bulk--Surface Reaction--Diffusion System},
  author = {Hausberg, Stephan and R{\"o}ger, Matthias},
  year = 2018,
  month = apr,
  journal = {Nonlinear Differential Equations and Applications NoDEA},
  volume = {25},
  number = {3},
  pages = {17},
  doi = {10.1007/s00030-018-0508-8},
  langid = {english}
}

@article{ikeda2025,
  title = {Stability of Single Transition Layer in Mass-Conserving Reaction-Diffusion Systems with Bistable Nonlinearity},
  author = {Ikeda, Hideo and Kuwamura, Masataka},
  year = 2025,
  month = sep,
  journal = {Journal of Differential Equations},
  volume = {440},
  pages = {113430},
  doi = {10.1016/j.jde.2025.113430},
  langid = {english}
}

@article{ishii2026,
  title = {Spot Solutions to a Neural Field Equation on Oblate Spheroids},
  author = {Ishii, Hiroshi and Watanabe, Riku},
  year = 2026,
  month = jan,
  journal = {Communications in Nonlinear Science and Numerical Simulation},
  volume = {152},
  pages = {109172},
  doi = {10.1016/j.cnsns.2025.109172},
  langid = {english}
}

@article{Iyaniwura2021,
  title = {Optimization of the Mean First Passage Time in Near-Disk and Elliptical Domains in 2-{{D}} with Small Absorbing Traps},
  author = {Iyaniwura, Sarafa A. and Wong, Tony and Macdonald, Colin B. and Ward, Michael J.},
  year = 2021,
  journal = {SIAM Review},
  volume = {63},
  number = {3},
  pages = {525--555},
  doi = {10.1137/20M1332396}
}

@article{kagawa2024,
  title = {Critical Slowing down for Relaxation in the {{Cahn}}--{{Hilliard}} Equation with Dynamic Boundary Conditions},
  author = {Kagawa, Keiichiro and Yamazaki, Yoshihiro},
  year = 2024,
  journal = {JSIAM Letters},
  volume = {16},
  pages = {73--76},
  doi = {10.14495/jsiaml.16.73},
  langid = {english}
}

@article{kolokolnikov2003,
  title = {Reduced Wave {{Green}}'s Functions and Their Effect on the Dynamics of a Spike for the {{Gierer}}--{{Meinhardt}} Model},
  author = {Kolokolnikov, Theodore and Ward, Michael J.},
  year = 2003,
  month = oct,
  journal = {European Journal of Applied Mathematics},
  volume = {14},
  number = {5},
  pages = {513--545},
  doi = {10.1017/S0956792503005254},
  langid = {english}
}

@article{kolokolnikov2004,
  title = {Bifurcation of Spike Equilibria in the Near-Shadow {{Gierer-Meinhardt}} Model},
  author = {Kolokolnikov, Theodore and Ward, Michael J.},
  year = 2004,
  journal = {Discrete and Continuous Dynamical Systems - B},
  volume = {4},
  number = {4},
  pages = {1033--1064},
  publisher = {{Discrete and Continuous Dynamical Systems - B}},
  doi = {10.3934/dcdsb.2004.4.1033},
  langid = {english}
}

@article{kolokolnikov2005,
  title = {Optimizing the Fundamental {{Neumann}} Eigenvalue for the {{Laplacian}} in a Domain with Small Traps},
  author = {Kolokolnikov, T. and Titcombe, M. S. and Ward, M. J.},
  year = 2005,
  month = apr,
  journal = {European Journal of Applied Mathematics},
  volume = {16},
  number = {2},
  pages = {161--200},
  doi = {10.1017/S0956792505006145},
  langid = {english}
}

@article{kozdron2005,
  title = {Estimates of {{Random Walk Exit Probabilities}} and {{Application}} to {{Loop-Erased Random Walk}}},
  author = {Kozdron, Michael and Lawler, Gregory},
  year = 2005,
  month = jan,
  journal = {Electronic Journal of Probability},
  volume = {10},
  doi = {10.1214/EJP.v10-294},
  langid = {english}
}

@article{kuwamura2024,
  title = {Single Transition Layer in Mass-Conserving Reaction-Diffusion Systems with Bistable Nonlinearity},
  author = {Kuwamura, Masataka and Teramoto, Takashi and Ikeda, Hideo},
  year = 2024,
  month = nov,
  journal = {Nonlinearity},
  volume = {37},
  number = {11},
  pages = {115013},
  doi = {10.1088/1361-6544/ad7fc4},
  langid = {english}
}

@book{legall2016,
  title = {Brownian {{Motion}}, {{Martingales}}, and {{Stochastic Calculus}}},
  author = {Le Gall, Jean-Fran{\c c}ois},
  year = 2016,
  series = {Graduate {{Texts}} in {{Mathematics}}},
  volume = {274},
  publisher = {Springer International Publishing},
  address = {Cham},
  doi = {10.1007/978-3-319-31089-3},
  isbn = {978-3-319-31088-6 978-3-319-31089-3},
  langid = {english}
}

@article{macdonald2016,
  title = {A Computational Method for the Coupled Solution of Reaction--Diffusion Equations on Evolving Domains and Manifolds: {{Application}} to a Model of Cell Migration and Chemotaxis},
  shorttitle = {A Computational Method for the Coupled Solution of Reaction--Diffusion Equations on Evolving Domains and Manifolds},
  author = {MacDonald, G. and Mackenzie, J.A. and Nolan, M. and Insall, R.H.},
  year = 2016,
  month = mar,
  journal = {Journal of Computational Physics},
  volume = {309},
  pages = {207--226},
  doi = {10.1016/j.jcp.2015.12.038},
  pmcid = {PMC4896117},
  pmid = {27330221}
}

@article{madzvamuse2016,
  title = {The Bulk-Surface Finite Element Method for Reaction--Diffusion Systems on Stationary Volumes},
  author = {Madzvamuse, Anotida and Chung, Andy H.W.},
  year = 2016,
  month = jan,
  journal = {Finite Elements in Analysis and Design},
  volume = {108},
  pages = {9--21},
  doi = {10.1016/j.finel.2015.09.002},
  langid = {english}
}

@article{mangeat2019,
  title = {The Narrow Escape Problem in a Circular Domain with Radial Piecewise Constant Diffusivity},
  author = {Mangeat, Matthieu and Rieger, Heiko},
  year = 2019,
  month = oct,
  journal = {Journal of Physics A: Mathematical and Theoretical},
  volume = {52},
  number = {42},
  eprint = {1906.06975},
  primaryclass = {cond-mat},
  pages = {424002},
  doi = {10.1088/1751-8121/ab4348},
  archiveprefix = {arXiv},
  langid = {english}
}

@article{miller2023,
  title = {Generation and {{Motion}} of {{Interfaces}} in a {{Mass-Conserving Reaction-Diffusion System}}},
  author = {Miller, Pearson W. and Fortunato, Daniel and Novaga, Matteo and Shvartsman, Stanislav Y. and Muratov, Cyrill B.},
  year = 2023,
  month = sep,
  journal = {SIAM Journal on Applied Dynamical Systems},
  volume = {22},
  number = {3},
  pages = {2408--2431},
  doi = {10.1137/22M152548X},
  langid = {english}
}

@article{mori2008,
  title = {Wave-{{Pinning}} and {{Cell Polarity}} from a {{Bistable Reaction-Diffusion System}}},
  author = {Mori, Yoichiro and Jilkine, Alexandra and {Edelstein-Keshet}, Leah},
  year = 2008,
  month = may,
  journal = {Biophysical Journal},
  volume = {94},
  number = {9},
  pages = {3684--3697},
  doi = {10.1529/biophysj.107.120824}
}

@article{mori2011,
  title = {Asymptotic and {{Bifurcation Analysis}} of {{Wave-Pinning}} in a {{Reaction-Diffusion Model}} for {{Cell Polarization}}},
  author = {Mori, Yoichiro and Jilkine, Alexandra and {Edelstein-Keshet}, Leah},
  year = 2011,
  month = jan,
  journal = {SIAM Journal on Applied Mathematics},
  volume = {71},
  number = {4},
  pages = {1401--1427},
  doi = {10.1137/10079118X},
  langid = {english}
}

@article{mori2022,
  title = {Representation Formulas for Stationary Solutions of a Cell Polarization Model},
  author = {Mori, Tatsuki and Tsujikawa, Tohru and Yotsutani, Shoji},
  year = 2022,
  month = dec,
  journal = {Japan Journal of Industrial and Applied Mathematics},
  volume = {39},
  number = {3},
  pages = {1025--1053},
  doi = {10.1007/s13160-022-00537-8},
  langid = {english}
}

@article{morita2020,
  title = {Turing Type Instability in a Diffusion Model with Mass Transport on the Boundary},
  author = {Morita, Yoshihisa and Sakamoto, Kunimochi},
  year = 2020,
  journal = {Discrete and Continuous Dynamical Systems},
  volume = {40},
  number = {6},
  pages = {3813--3836},
  doi = {10.3934/dcds.2020160},
  langid = {english}
}

@article{morita2021,
  title = {Long Time Behavior and Stable Patterns in High-Dimensional Polarity Models of Asymmetric Cell Division},
  author = {Morita, Yoshihisa and {Seirin-Lee}, Sungrim},
  year = 2021,
  month = jun,
  journal = {Journal of Mathematical Biology},
  volume = {82},
  number = {7},
  pages = {66},
  doi = {10.1007/s00285-021-01619-w},
  langid = {english}
}

@article{ni1993,
  title = {Locating the Peaks of Least-Energy Solutions to a Semilinear {{Neumann}} Problem},
  author = {Ni, Wei-Ming and Takagi, Izumi},
  year = 1993,
  month = may,
  journal = {Duke Mathematical Journal},
  volume = {70},
  number = {2},
  pages = {247--281},
  publisher = {Duke University Press},
  doi = {10.1215/S0012-7094-93-07004-4},
  langid = {english}
}

@article{otsuji2007,
  title = {{A Mass Conserved Reaction--Diffusion System Captures Properties of Cell Polarity}},
  author = {Otsuji, Mikiya and Ishihara, Shuji and Co, Carl and Kaibuchi, Kozo and Mochizuki, Atsushi and Kuroda, Shinya},
  year = 2007,
  month = jun,
  journal = {PLOS Computational Biology},
  volume = {3},
  number = {6},
  pages = {e108},
  publisher = {Public Library of Science},
  doi = {10.1371/journal.pcbi.0030108},
  langid = {japanese}
}

@article{pillay2010,
  title = {An {{Asymptotic Analysis}} of the {{Mean First Passage Time}} for {{Narrow Escape Problems}}: {{Part I}}: {{Two-Dimensional Domains}}},
  shorttitle = {An {{Asymptotic Analysis}} of the {{Mean First Passage Time}} for {{Narrow Escape Problems}}},
  author = {Pillay, S. and Ward, M. J. and Peirce, A. and Kolokolnikov, T.},
  year = 2010,
  month = jan,
  journal = {Multiscale Modeling \& Simulation},
  volume = {8},
  number = {3},
  pages = {803--835},
  doi = {10.1137/090752511},
  langid = {english}
}

@article{ramirez2015,
  title = {Dendritic Spine Geometry Can Localize {{GTPase}} Signaling in Neurons},
  author = {Ramirez, Samuel A. and Raghavachari, Sridhar and Lew, Daniel J.},
  editor = {{Edelstein-Keshet}, Leah},
  year = 2015,
  month = nov,
  journal = {Molecular Biology of the Cell},
  volume = {26},
  number = {22},
  pages = {4171--4181},
  doi = {10.1091/mbc.E15-06-0405},
  langid = {english}
}

@article{ratz2014,
  title = {Symmetry Breaking in a Bulk--Surface Reaction--Diffusion Model for Signalling Networks},
  author = {R{\"a}tz, Andreas and R{\"o}ger, Matthias},
  year = 2014,
  month = aug,
  journal = {Nonlinearity},
  volume = {27},
  number = {8},
  pages = {1805--1827},
  doi = {10.1088/0951-7715/27/8/1805},
  langid = {english}
}

@article{sakajo2021,
  title = {Spot {{Dynamics}} of a {{Reaction-Diffusion System}} on the {{Surface}} of a {{Torus}}},
  author = {Sakajo, Takashi and Wang, Penghao},
  year = 2021,
  month = jan,
  journal = {SIAM Journal on Applied Dynamical Systems},
  volume = {20},
  number = {2},
  pages = {1053--1089},
  doi = {10.1137/20M1380636},
  langid = {english}
}

@article{scheel2021,
  title = {Signaling Gradients in Surface Dynamics as Basis for Planarian Regeneration},
  author = {Scheel, Arnd and Stevens, Angela and Tenbrock, Christoph},
  year = 2021,
  month = jul,
  journal = {Journal of Mathematical Biology},
  volume = {83},
  number = {1},
  pages = {6},
  doi = {10.1007/s00285-021-01627-w},
  langid = {english}
}

@article{verdugo2022,
  title = {The Software Design of {{Gridap}}: {{A Finite Element}} Package Based on the {{Julia JIT}} Compiler},
  shorttitle = {The Software Design of {{Gridap}}},
  author = {Verdugo, Francesc and Badia, Santiago},
  year = 2022,
  month = jul,
  journal = {Computer Physics Communications},
  volume = {276},
  pages = {108341},
  doi = {10.1016/j.cpc.2022.108341},
  langid = {english}
}

@article{yotsutani2015,
  title = {Global Bifurcation Sheet and Diagrams of Wave-Pinning in a Reaction-Diffusion Model for Cell Polarization},
  author = {Yotsutani, Shoji and Tsujikawa, Tohru and Nagayama, Masaharu and Kuto, Kousuke and Mori, Tatsuki},
  year = 2015,
  month = nov,
  journal = {Dynamical Systems and Differential Equations, AIMS Proceedings 2015},
  pages = {861--877},
  publisher = {American Institute of Mathematical Sciences},
  doi = {10.3934/proc.2015.0861},
  langid = {english}
}

\end{document}